\begin{document}

\title{Fractional Noether's Theorem with Classical and Caputo Derivatives: constants of motion for non-conservative systems
}


\author{G.~S.~F.~Frederico         \and
        M.~J.~Lazo 
}


\institute{G.~S.~F.~Frederico \at
              Department of Science and Technology, University of Cape Verde, Praia, Santiago, Cape Verde \\
              \email{gastao.frederico@ufsc.br}             \\
             \emph{Present address: Department of Mathematics, Federal University of Santa Catarina, Florian\'opilis, SC, Brazil}  
           \and
           M.~J.~Lazo \at
              Institute of Mathematics, Statistics and Physics, Federal University of Rio Grande, Rio Grande, RS, Brazil.\\
              \email{matheuslazo@furg.br}
}

\date{Received: date / Accepted: date}

\maketitle

\begin{abstract}
Since the seminal work of Emmy Noether it is well know that all conservations laws in physics, \textrm{e.g.}, conservation of energy or conservation of momentum, are directly related to the invariance of the action under a family of transformations. However, the classical Noether's theorem can not yields informations about constants of motion for non-conservative systems since it is not possible to formulate physically meaningful Lagragians for this kind of systems in classical calculus of variation. On the other hand, in recent years the fractional calculus of variation within Lagrangians depending on fractional derivatives has emerged as an elegant alternative to study non-conservative systems. In the present work, we obtained a generalization of the Noether's theorem for Lagrangians depending on mixed classical and Caputo derivatives that can be used to obtain constants of motion for dissipative systems. In addition, we also obtained Noether's conditions for the fractional optimal control problem.
\keywords{Noether's Theorem \and Caputo Derivatives \and Fractional Calculus of Variation and Optimal Control}
\subclass{MSC 49K05 \and MSC 26A33}
\end{abstract}

\section{Introduction}

The fractional calculus with derivatives and integrals of non-integer order started more
than three centuries ago, with l'H\^opital and Leibniz, when the derivative of order $1/2$
was suggested (see \cite{OldhamSpanier,CD:SaKiMa:1993,TenreiroMachado1,TenreiroMachado2} for the history of fractional calculus). 
This subject was then considered
by several mathematicians like Euler, Fourier, Liouville, Grunwald, Letnikov, Riemann, and
many others up to nowadays. Although the fractional calculus is almost as old as the usual
 integer order calculus, only in the last three decades it has gained more attention due to
 its many applications in various fields of science, engineering, economics,
 biomechanics, etc. (see \cite{HerrmannBook,CD:Hilfer:2000,Kilbas,Magin,SATM} for a review).
Actually, there are several definitions for fractional derivatives,
being the Riemann-Liouville and the Caputo the most popular definitions.
In special, the Caputo fractional derivative was introduced by Caputo and Mainardi 
in a seminal work \cite{CaputoMainardi} to model dissipation phenomenons.
Fractional derivatives are generally nonlocal operators and are historically
applied to study nonlocal or time dependent processes. In special, the first
and well established application of fractional calculus in Physics was in the
framework of anomalous diffusion observed in many physical
systems (e.g. in dispersive transport in amorphous semiconductor,
liquid crystals, polymers, proteins, etc \cite{Klages,GMMP,Metzler}).
Recently, the study of nonlocal quantum phenomena through fractional calculus
began a fast development, where the nonlocal effects are due to either long-range
interactions or time-dependent processes
with many scales \cite{CD:Hilfer:2000,Iomin,KBD,Laskin,Naber,Tarasov1,WBG}.
Relativistic quantum mechanics \cite{Herrmann,KP,MAB,Raspini,Zavada} and field
theories \cite{BI,Calcagni1,Lazo,Tarasov3,Vacaru} has been also
recently considered in the context of fractional calculus.

The fractional calculus of variation was introduced in the context of classical
mechanics. Riewe \cite{CD:Riewe:1997} showed that a Lagrangian involving fractional
time derivatives leads to an equation of motion with non-conservative forces such as
friction. It is a remarkable result since frictional and non-conservative forces are
beyond the usual macroscopic variational treatment \cite{Bauer}. Riewe
generalized the usual calculus of variations for a Lagrangian depending
on Riemann-Liouville fractional derivatives \cite{CD:Riewe:1997} in order
to deal with linear non-conservative forces. Actually, several approaches
have been developed to generalize the least action principle to include
problems depending on Caputo fractional derivatives, Riemann--Liouville
fractional derivatives, Riesz fractional derivatives and
others \cite{CD:Agrawal:2002,AT,BA,Cresson,LazoTorres1,OMT,MyID:227}
(see \cite{book:frac} for a recent review). Among theses approaches,
recently it was show that the action principle for dissipative systems
can be generalized, fixing the mathematical inconsistencies present in
the original Riewe's formulation, by using Lagrangians depending on classical
and Caputo derivatives \cite{LazoCesar}. The great importance of these results
is the fact that the calculus of variation with Lagrangians depending on both
classical and Caputo derivatives enable us to use all the mathematical machinery
of classical mechanics to study non-conservative systems.

Among the mathematical machinery of classical mechanics, the
Noether's theorem o f calculus of variation becomes one of the most
important theorems for physics in the 20th century. Since the
seminal work of Emmy Noether it is well know that all conservations
laws in mechanics, \textrm{e.g.}, conservation of energy or
conservation of momentum, are directly related to the invariance of
the action under a family of transformations. On the other hand,
non-conservative forces remove energy from the systems and, as a
consequence, the standard Noether constants of motion are broken. In
this context, the generalization of the Noether's theorem for the
fractional calculus of variation is fundamental to investigate the
action symmetries for non-conservative systems. Recently, it was
show that it is still possible to obtain Noether-type theorems for
fractional calculus of variations which cover both conservative and
nonconservative cases
\cite{El-Nabulsi,MR2351636,MR2405377,CD:FredericoTorres:2007,Caputo,CD:FredericoTorres:2006,FredericoTorres:2010,Zhang}.
In the present work, we generalize Noether's theorem for
Lagrangians depending on mixed classical and Caputo derivatives. 
It is important to stress that our results are based in the classical 
notion of conserved quantity $C$, that is, the classical derivative 
of such a quantity is equal to zero ($dC/dt=0$). It is a different 
approach from previous works 
\cite{MR2405377,CD:FredericoTorres:2007,Caputo,CD:FredericoTorres:2006,FredericoTorres:2010}
where it was introduced the notion of fractional-conserved quantity, 
where the classical derivative is substituted by a bilinear fractional 
operator $D$ ($D(C)=0$). Consequently, our present work is free from 
the difficulties introduced by  the notion of fractional-conserved 
quantity (see \cite{Bourdin} for a detailed discussion). Furthermore,
the generalized Noether's theorem we obtain enable us to investigate
constants of motion  for dissipative systems in the context of the
action principle formulated in \cite{LazoCesar}. As an example of
application to non-conservative systems, we study the problem of a
particle under a frictional force. Furthermore, we also generalize
the Noether-type theorems for the optimal control problem with
classical and Caputo derivatives.

The paper is organized in the following way. In Section \ref{sec:fdRL} we review the basic notions of Riemann-Liouville and Caputo fractional calculus, that are needed for formulating the fractional problem of the calculus of variations. The Euler-Lagrange equation and the Noether's theorem for Lagrangians depending on mixed classical and Caputo derivatives are obtained in Section \ref{sec:delay}. An example of application of the Noether's theorem for a particle under a frictional force is presented in Section \ref{example}. In Section \ref{foc} we generalize the Noether's theorems for the optimal control problem with classical and Caputo derivatives. Finally, the conclusions are presented in Section \ref{conclusion}.


\section{Preliminaries on Fractional Calculus}
\label{sec:fdRL}

In this section we fix notations by collecting the definitions and
properties of fractional integrals and derivatives needed in the
sequel
\cite{CD:Agrawal:2007,CD:MilRos:1993,CD:Podlubny:1999,CD:SaKiMa:1993}.

\begin{definition}(Riemann--Liouville fractional integrals)
Let $f$ be a continuous function in the interval $[a,b]$. For $t \in
[a,b]$, the left Riemann--Liouville fractional integral
$_aI_t^\alpha f(t)$ and the right Riemann--Liouville fractional
integral $_tI_b^\alpha f(t)$ of order $\alpha$,  are defined by
\begin{gather*}
_aI_t^\alpha f(t) = \frac{1}{\Gamma(\alpha)}\int_a^t
(t-\theta)^{\alpha-1}f(\theta)d\theta,\\
_tI_b^\alpha f(t) = \frac{1}{\Gamma(\alpha)}\int_t^b
(\theta-t)^{\alpha-1}f(\theta)d\theta,
\end{gather*}
where $\Gamma$ is the Euler gamma function and $0 <\alpha < 1$.
\end{definition}

\begin{definition}(Fractional derivatives in the sense of
Riemann--Liouville) Let $f$ be a continuous function in the interval
$[a,b]$. For $t \in [a,b]$, the left Riemann--Liouville fractional
derivative $_aD_t^\alpha f(t)$ and the right Riemann--Liouville
fractional derivative $_tD_b^\alpha f(t)$ of order $\alpha$ are
defined by
\begin{equation*}
\begin{split}
_aD_t^\alpha f(t)&={\frac{d}{dt}}\, {_aI_t^{1-\alpha}} f(t)\\
&= \frac{1}{\Gamma(1-\alpha)}\frac{d}{dt} \int_a^t
(t-\theta)^{-\alpha}f(\theta)d\theta
\end{split}
\end{equation*}
and
\begin{equation*}
\begin{split}
_tD_b^\alpha f(t) &= {-\frac{d}{dt}}\,{_tI_b^{1-\alpha}}f(t)\\
&= \frac{-1}{\Gamma(1-\alpha)}\frac{d}{dt} \int_t^b(\theta
-t)^{-\alpha}f(\theta)d\theta.
\end{split}
\end{equation*}
\end{definition}

\begin{definition}(Fractional derivatives in the sense of
Caputo) Let $f$ be a continuously  differentiable function in the
interval $[a,b]$. For $t \in [a,b]$, the left Caputo fractional
derivative ${_a^CD_t^\alpha} f(t)$ and the right Caputo fractional
derivative ${_t^CD_b^\alpha} f(t)$ of order $\alpha$ are defined by

\begin{equation*}
\begin{split}
_a^CD_t^\alpha f(t)&=\, {_aI_t^{1-\alpha}}{\frac{d}{dt}} f(t)\\
&= \frac{1}{\Gamma(1-\alpha)} \int_a^t
(t-\theta)^{-\alpha}{\frac{d}{d\theta}}f(\theta)d\theta
\end{split}
\end{equation*}
and
\begin{equation*}
\begin{split}
_t^CD_b^\alpha f(t) &= \,{_tI_b^{1-\alpha}}\left({-\frac{d}{dt}}\right)f(t)\\
&= \frac{-1}{\Gamma(1-\alpha)} \int_t^b(\theta
-t)^{-\alpha}\frac{d}{d\theta}f(\theta)d\theta.
\end{split}
\end{equation*}
\end{definition}

\begin{remark} If the Riemann--Liouville and the Caputo fractional
derivatives exist, then they are connected by the following
relations:
\begin{equation*}
 _a^CD_t^\alpha f(t) := {_aD_t^\alpha} \big( f- f(a) \big) (t)
\quad \Big( \text{resp.} \quad  \; _t^CD_b^\alpha f(t) :=
{_tD_b^\alpha} \big( f - f(b) \big) (t) \Big)\,.
\end{equation*}

Let us note that if $f(a)=0$ (resp. $f(b)=0$), then $_a^CD_t^\alpha
f(t) = {_aD_t^\alpha} f(t)$ (resp. ${_t^CD_b^\alpha} f(t) =
{_tD_b^\alpha}f(t)$).
\end{remark}

\begin{remark}
The Caputo fractional derivative of a constant is always equal to
zero. This is not the case of the strict fractional derivative in
the Riemann--Liouville sense.
\end{remark}

\begin{remark}
In the \textit{classical case} $\alpha = 1$, the fractional
derivatives of Riemann--Liouville
 and Caputo both coincide with the classical derivative. Precisely, modulo a $(-1)$ term in
 the right case, we have ${_aD_t^1} = {_a^CD_t^1}
  = - {_t^CD_b^1} = - {_tD_b^\alpha} = d/dt$.
\end{remark}

\begin{theorem}
Let $f$ and $g$ be two continuously differentiable functions
on $[a,b]$. Then, for all $t \in [a,b]$, the following property holds:
$$
{_a^CD_t^\alpha}\left(f(t)+g(t)\right)
= {_a^CD_t^\alpha}f(t)+{_a^CD_t^\alpha}g(t).
$$
\end{theorem}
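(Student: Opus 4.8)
The plan is to reduce the statement to two elementary facts: the linearity of ordinary differentiation and the additivity of the Riemann integral. First I would start from the definition of the left Caputo derivative given above, evaluated at the sum $f+g$:
\[
{_a^CD_t^\alpha}\big(f(t)+g(t)\big) = \frac{1}{\Gamma(1-\alpha)}\int_a^t (t-\theta)^{-\alpha}\,\frac{d}{d\theta}\big(f(\theta)+g(\theta)\big)\,d\theta.
\]

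Next I would invoke linearity of the classical derivative to replace $\frac{d}{d\theta}(f+g)$ by $\frac{d}{d\theta}f+\frac{d}{d\theta}g$ inside the integrand. Since $f$ and $g$ are assumed continuously differentiable on $[a,b]$, each of $\frac{df}{d\theta}$ and $\frac{dg}{d\theta}$ is continuous, and multiplying by the kernel $(t-\theta)^{-\alpha}$ — which is integrable on $[a,t]$ because $0<\alpha<1$ — yields integrable integrands. Hence the integral of the sum splits, by additivity of the integral, into the sum of two convergent integrals, each still carrying the factor $1/\Gamma(1-\alpha)$.

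Finally I would recognize each of the two resulting expressions as precisely the defining formula for ${_a^CD_t^\alpha}f(t)$ and ${_a^CD_t^\alpha}g(t)$ respectively, which assembles into the claimed identity.

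There is no substantive obstacle here; the only point that deserves a word of justification is the legitimacy of splitting the integral, which is guaranteed by the continuous differentiability hypothesis together with the integrability of the singular kernel for $0<\alpha<1$. The same argument applies verbatim to the right Caputo derivative, with the obvious change of integration limits and sign.
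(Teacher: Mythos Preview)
Your argument is correct: the Caputo derivative is defined as a Riemann integral applied to the classical derivative, and both of these operations are linear, so the identity follows immediately. The only subtlety, which you handle, is that the weakly singular kernel $(t-\theta)^{-\alpha}$ is integrable for $0<\alpha<1$, so each summand integral converges separately and the split is legitimate.

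As for comparison with the paper: the paper does not supply a proof of this statement. It appears in the preliminaries section as one of several standard properties of fractional derivatives collected from the references \cite{CD:Agrawal:2007,CD:MilRos:1993,CD:Podlubny:1999,CD:SaKiMa:1993}, and is simply stated without argument. Your direct verification from the definition is the natural proof and is exactly what one finds in those references.
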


We now present the integration by parts formula for fractional
derivatives.

\begin{lemma}
If $f$, $g$, and the fractional derivatives ${_a^C D_t^\alpha} g$
and $_tD_b^\alpha f$ are continuous at every point $t\in[a,b]$, then
\begin{equation*}
\int_{a}^{b}g(t)\cdot {_a^C D_t^\alpha}f(t)dt =\int_a^b f(t)\cdot
{_t D_b^\alpha} g(t)dt+\left[{_tI_b^{1-\alpha}}g(t) \cdot
f(t)\right]_{t=a}^{t=b}
\end{equation*}

for any $0<\alpha<1$. Moreover, if $f$ is a function such that
$f(a)=f(b)=0\,,$ we have simpler formula:

\begin{equation}
\label{integracao:partes}\int_{a}^{b}g(t)\cdot {_a^C
D_t^\alpha}f(t)dt =\int_a^b f(t)\cdot {_t D_b^\alpha} g(t)dt\,.
\end{equation}
\end{lemma}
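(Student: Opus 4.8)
The plan is to peel the Caputo derivative apart into its defining Riemann--Liouville integral, transfer that integral from $f$ onto $g$ by a Fubini-type interchange, and then close with an elementary integration by parts. Throughout, the continuity hypotheses on $f$, $g$ and the relevant fractional derivatives are precisely what guarantee that the iterated integrals converge and that the order of integration may be swapped.

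First I would use the definition ${_a^C D_t^\alpha}f(t) = {_aI_t^{1-\alpha}}\tfrac{d}{dt}f(t)$ to rewrite the left-hand side as $\int_a^b g(t)\cdot {_aI_t^{1-\alpha}}f'(t)\,dt$. Writing out the Riemann--Liouville integral explicitly turns this into a double integral over the triangle $\{a\le \theta \le t \le b\}$ with kernel $(t-\theta)^{-\alpha}/\Gamma(1-\alpha)$. The key step is to apply Fubini's theorem to interchange the order of integration; after relabelling, the inner integral over $t$ reconstitutes exactly the right Riemann--Liouville integral ${_tI_b^{1-\alpha}}g$, yielding the adjoint relation $\int_a^b g(t)\cdot {_aI_t^{1-\alpha}}f'(t)\,dt = \int_a^b f'(t)\cdot {_tI_b^{1-\alpha}}g(t)\,dt$.

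Next I would perform a classical integration by parts on this right-hand integral, differentiating the factor ${_tI_b^{1-\alpha}}g$ and integrating $f'$. This produces the boundary term $\left[f(t)\cdot {_tI_b^{1-\alpha}}g(t)\right]_{t=a}^{t=b}$ together with $-\int_a^b f(t)\cdot \tfrac{d}{dt}\bigl({_tI_b^{1-\alpha}}g(t)\bigr)\,dt$. Invoking the definition of the right Riemann--Liouville derivative, ${_tD_b^\alpha}g(t) = -\tfrac{d}{dt}{_tI_b^{1-\alpha}}g(t)$, converts this last integral into $+\int_a^b f(t)\cdot {_tD_b^\alpha}g(t)\,dt$, and everything assembles into the claimed identity. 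The reduced formula \eqref{integracao:partes} then follows at once, since the boundary term vanishes whenever $f(a)=f(b)=0$.

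I expect the only genuine obstacle to be the rigorous justification of the Fubini interchange, since the kernel $(t-\theta)^{-\alpha}$ is singular along the diagonal $\theta=t$. I would secure absolute integrability over the triangle by using $0<\alpha<1$ (which keeps the singularity integrable) together with the boundedness of the continuous functions $g$ and $f'$ on the compact interval $[a,b]$. Once integrability is established, every remaining manipulation is routine.
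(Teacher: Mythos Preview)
The paper does not actually supply a proof of this lemma: it is stated in the preliminaries section as a known result, with the references \cite{CD:Agrawal:2007,CD:MilRos:1993,CD:Podlubny:1999,CD:SaKiMa:1993} cited at the head of that section serving as the source. So there is no ``paper's own proof'' to compare against.

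Your argument is correct and is in fact the standard derivation found in those references: expand the Caputo derivative as ${_aI_t^{1-\alpha}}f'$, use Fubini on the triangular region to pass the fractional integral from $f'$ onto $g$ (obtaining the adjoint identity $\int_a^b g\,{_aI_t^{1-\alpha}}f'\,dt=\int_a^b f'\,{_tI_b^{1-\alpha}}g\,dt$), and then integrate by parts classically, recognising $-\tfrac{d}{dt}{_tI_b^{1-\alpha}}g={_tD_b^\alpha}g$. Your treatment of the Fubini step is the right one: since $0<\alpha<1$ the kernel $(t-\theta)^{-\alpha}$ is integrable, and the continuity of $g$ and of $f'$ (the latter implicit in the existence of the Caputo derivative, which the paper defines only for $f\in C^1$) gives boundedness on the compact triangle, so absolute integrability holds and Tonelli--Fubini applies.

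One minor remark: the hypotheses as printed in the paper name ${_a^C D_t^\alpha}g$ and ${_tD_b^\alpha}f$, whereas the formula itself involves ${_a^C D_t^\alpha}f$ and ${_tD_b^\alpha}g$; this is almost certainly a typographical swap in the statement, and your proof correctly uses the hypotheses that the formula actually requires.
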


\begin{remark}
We note that
formula \eqref{integracao:partes} is still
valid for $\alpha=1$ provided $f$ or $g$
are zero at $t=a$ and $t=b\,.$
\end{remark}


\section{Main results: Euler--Lagrange equations and Noether's theorems for variational problems with classical and Caputo derivatives}
 \label{sec:delay}

In Section~\ref{sub1} we prove two important results for variational
problems: a necessary optimality condition of Euler--Lagrange type
(Theorem~\ref{Thm:FractELeq1}) and a Noether-type theorem
(Theorem~\ref{theo:tndf}). The results are then extended in
Section~\ref{foc} to the more general setting of optimal control.


\subsection{Fractional variational problems with classical and Caputo derivatives}
\label{sub1}

We begin by formulating the fundamental problem in Lagrange form
under investigation.

\begin{problem}
\label{Pb1} The fractional problem of the calculus of variations
with classical and Caputo derivatives in Lagrange form consists to
find the stationary functions of the functional
\begin{equation}
\label{Pf} I[q(\cdot)] = \int_a^b
L\left(t,q(t),\dot{q}(t),{_a^CD_t^\alpha} q(t)\right) dt \,
\tag{$P_{C}$}
\end{equation}
subject to given appropriate boundary conditions, where $[a,b]
\subset \mathbb{R}$, $a<b$, $0 < \alpha< 1$,
$\dot{q}=\frac{dq}{dt}$, and the admissible functions $q: t \mapsto
q(t)$ and the Lagrangian $L: (t,q,v,v_l) \mapsto L(t,q,v,v_l)$ are
assumed to be $C^2$:
\begin{gather*}
q(\cdot) \in C^2\left([a,b];\,\mathbb{R}^n \right)\text{;}\\
L(\cdot,\cdot,\cdot,\cdot) \in
C^2\left([a,b]\times\mathbb{R}^n\times
\mathbb{R}^n\times\mathbb{R}^n ;\,\mathbb{R}\right)\text{.}
\end{gather*}
\end{problem}

Functional of the kind \eqref{Pf} with mixed integer order and Caputo fractional derivatives was previously considered in \cite{LazoCesar,OMT}. However, a Noether-type theorem for \eqref{Pf} is not yet considered in the literature. Furthermore, despite that the fundamental problem \eqref{Pf} could easily be generalized for $\alpha >1$, we choose $0<\alpha \leq 1$ for simplicity. Along the work, we denote by $\partial_{i}L$ the partial derivative
of $L$ with respect to its $i$th argument, $i=1,\ldots,4\,.$


\subsubsection{Fractional Euler--Lagrange equations}

The Euler--Lagrange necessary optimality condition
 is central in achieving the main results of this work.
 Our results are formulated and proved using the Euler--Lagrange equations \eqref{eq:eldf}.

\begin{definition} (Space of variations) We denote by $S_{h}(a,b)$
the set of functions $h(\cdot)\in C^2\left([a,b];\,\mathbb{R}^n
\right)$ such that $h(a)=h(b)=0$\,.
\end{definition}

\begin{definition}
The funcional $I[(\cdot)]$ is $S_{h}(a,b)$-differentiable on a curve
$q(\cdot)\in C^2\left([a,b];\,\mathbb{R}^n \right)$ if and only if
its \emph{Fréchet differential}
$$\lim_{\epsilon\rightarrow 0}\frac{I[q+\epsilon h]-I[q]}{\epsilon}$$
exists in any direction $h(\cdot)\in S_{h}(a,b)$, then $DI$ is
called its differential and is given by
$$DI[q](h)=\lim_{\epsilon\rightarrow 0}\frac{I[q+\epsilon h]-I[q]}{\epsilon}\,.$$
\end{definition}

\begin{definition}(Fractional $S_h$-extremal with classical and Caputo derivatives).
\label{df1} We say that $q(\cdot)$ is an $S_h$-extremal with
classical and Caputo derivatives for funcional \eqref{Pf} if for any
$h(\cdot)\in S_{h}(a,b)$
$$DI[q](h)=0\,.$$
\end{definition}

\begin{theorem}
The differential of $I[(\cdot)]$ on $q(\cdot)\in
C^2\left([a,b];\,\mathbb{R}^n \right)$ is given by
\begin{multline}
\label{pel1} DI[q](h)=\int_a^b\Bigl[\partial_{2}
L\left(t,q,\dot{q},{_a^CD_t^\alpha} q\right)\cdot h+\partial_{3}
L\left(t,q,\dot{q},{_a^CD_t^\alpha}
q \right)\cdot \dot{h}\\
+\partial_{4} L\left(t,q,\dot{q},{_a^CD_t^\alpha} q
\right)\cdot{_a^CD_t^\alpha}h\Bigr] dt\,.
\end{multline}
\end{theorem}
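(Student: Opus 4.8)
The plan is to compute the Fréchet differential directly from its definition, $DI[q](h)=\lim_{\epsilon\to 0}\epsilon^{-1}\bigl(I[q+\epsilon h]-I[q]\bigr)$, by expanding the perturbed functional. First I would write out
$$
I[q+\epsilon h]=\int_a^b L\left(t,\,q+\epsilon h,\,\dot q+\epsilon\dot h,\,{_a^CD_t^\alpha}q+\epsilon\,{_a^CD_t^\alpha}h\right)\,dt,
$$
using the linearity of the Caputo derivative (the additivity theorem stated just above) to split ${_a^CD_t^\alpha}(q+\epsilon h)={_a^CD_t^\alpha}q+\epsilon\,{_a^CD_t^\alpha}h$. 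The integrand is a function of $\epsilon$ for each fixed $t$, and since $L\in C^2$ the dependence on $\epsilon$ is smooth, so I can form the difference quotient and pass to the limit.

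The core computation is to differentiate the integrand with respect to $\epsilon$ and evaluate at $\epsilon=0$. By the chain rule, the derivative of $L\bigl(t,q+\epsilon h,\dot q+\epsilon\dot h,{_a^CD_t^\alpha}q+\epsilon\,{_a^CD_t^\alpha}h\bigr)$ with respect to $\epsilon$ is
$$
\partial_2 L\cdot h+\partial_3 L\cdot\dot h+\partial_4 L\cdot{_a^CD_t^\alpha}h,
$$
with each partial derivative evaluated at $\bigl(t,q+\epsilon h,\dot q+\epsilon\dot h,{_a^CD_t^\alpha}q+\epsilon\,{_a^CD_t^\alpha}h\bigr)$; setting $\epsilon=0$ reproduces exactly the three terms in \eqref{pel1}, each $\partial_i L$ now evaluated along the unperturbed curve $\bigl(t,q,\dot q,{_a^CD_t^\alpha}q\bigr)$.

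The one genuine technical point, and the step I would be most careful about, is the interchange of the limit (or $\partial/\partial\epsilon$) with the integral $\int_a^b\,dt$. I would justify this either by the hypotheses of smoothness and the compactness of $[a,b]$ — which give a uniform bound on the difference quotient and hence permit a dominated-convergence or uniform-convergence argument — or simply by invoking differentiation under the integral sign, valid because the integrand and its $\epsilon$-derivative are continuous in $(t,\epsilon)$ on the compact product domain (here the assumed continuity of ${_a^CD_t^\alpha}q$ and ${_a^CD_t^\alpha}h$ at every $t\in[a,b]$ is exactly what is needed to keep the fourth term well defined and continuous). Once that exchange is licensed, the result follows immediately; no integration by parts is required at this stage, since the statement gives the differential in its raw form rather than in Euler--Lagrange form.
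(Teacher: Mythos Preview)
Your proposal is correct and matches the paper's own proof, which simply states that the formula follows ``by direct computations with help of a Taylor expansion.'' If anything, you are more careful than the paper: your explicit use of the linearity of the Caputo derivative and your attention to justifying the interchange of limit and integral fill in details the paper leaves implicit.
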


\begin{proof}
We obtain equation \ref{pel1} by direct computations with help of a
Taylor expansion.
\end{proof}

We now obtain the fractional Euler--Lagrange necessary optimality condition.

\begin{theorem}(Fractional least-action principle).
\label{Thm:FractELeq1}
If $q(\cdot)$ is a  $S_h$-extremal to
Problem~\ref{Pb1}, then it satisfies the following
\emph{Euler--Lagrange equation with classical and Caputo derivatives}:
\begin{multline}
\label{eq:eldf}
\partial_{2} L\left(t,q(t),\dot{q}(t),{_a^CD_t^\alpha} q(t)\right)
-\frac{d}{dt}\partial_{3} L\left(t,q(t),\dot{q}(t),{_a^CD_t^\alpha}
q(t)\right) \\+ {_tD_b^\alpha}\partial_{4}
L\left(t,q(t),\dot{q}(t),{_a^CD_t^\alpha} q(t)\right)  = 0,\quad t \in
[a,b]\,.
\end{multline}
\end{theorem}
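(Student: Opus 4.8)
The plan is to start directly from the expression \eqref{pel1} for the differential and use the definition of an $S_h$-extremal (Definition~\ref{df1}), which forces $DI[q](h)=0$ for every admissible variation $h(\cdot)\in S_h(a,b)$. The whole strategy is to rewrite the right-hand side of \eqref{pel1} as a single integral in which $h$ appears as an undifferentiated factor, and then invoke the fundamental lemma of the calculus of variations. The two ``bad'' terms are the one carrying $\dot{h}$ and the one carrying ${_a^CD_t^\alpha}h$; each must have the derivative transferred off of $h$ and onto the corresponding partial derivative of $L$.

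First I would treat the middle term $\int_a^b \partial_3 L\cdot\dot{h}\,dt$ by ordinary integration by parts, producing $-\int_a^b \frac{d}{dt}\partial_3 L\cdot h\,dt$ together with the boundary term $[\partial_3 L\cdot h]_{t=a}^{t=b}$, which vanishes because $h(a)=h(b)=0$. Next I would treat the fractional term $\int_a^b \partial_4 L\cdot{_a^CD_t^\alpha}h\,dt$ using the fractional integration by parts Lemma. The key point is to identify $f=h$ (which does vanish at both endpoints) and $g=\partial_4 L$ in the statement of that Lemma, so that the simplified formula \eqref{integracao:partes} applies verbatim and yields $\int_a^b {_tD_b^\alpha}\partial_4 L\cdot h\,dt$ with no surviving boundary contribution. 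The hypotheses of the Lemma — continuity of $\partial_4 L$ and of the fractional derivatives ${_a^CD_t^\alpha}h$ and ${_tD_b^\alpha}\partial_4 L$ — are guaranteed by the standing assumptions $q(\cdot)\in C^2$ and $L\in C^2$ in Problem~\ref{Pb1}.

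Collecting the three contributions gives $\int_a^b\bigl[\partial_2 L-\frac{d}{dt}\partial_3 L+{_tD_b^\alpha}\partial_4 L\bigr]\cdot h\,dt=0$ for all $h\in S_h(a,b)$. The fundamental lemma of the calculus of variations (for variations vanishing at the endpoints) then forces the bracketed integrand to vanish identically on $[a,b]$, which is exactly \eqref{eq:eldf}. I expect the only genuinely delicate step to be the fractional integration by parts: one must check that the regularity demanded by the Lemma is actually met along the extremal $q$ and, in particular, that ${_tD_b^\alpha}\partial_4 L$ exists and is continuous, since $\partial_4 L$ need not vanish at the endpoints even though $h$ does. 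Everything else — the classical integration by parts and the final application of the fundamental lemma — is routine.
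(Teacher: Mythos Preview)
Your proposal is correct and mirrors the paper's own proof almost exactly: the paper also starts from $DI[q](h)=0$, applies classical integration by parts to the $\dot h$ term and the fractional integration by parts formula \eqref{integracao:partes} to the ${_a^CD_t^\alpha}h$ term (using $h(a)=h(b)=0$), and concludes with the fundamental lemma of the calculus of variations. Your added remark about verifying the regularity needed for \eqref{integracao:partes} is a point the paper glosses over but does not treat differently.
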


\begin{remark}
If $\alpha=1$, Problem~\ref{Pb1} is reduced to the classical problem
of the calculus of variations,
\begin{equation}
\label{eq:pbcv}
 I[q(\cdot)] = \int_a^b
F\left(t,q(t),\dot{q}(t)\right) \longrightarrow \min
\end{equation}
with $F\left(t,q(t),\dot{q}(t)\right):=L\left(t,q(t),\dot{q}(t),\dot{q}(t)\right)$, and one obtains from Theorem~\ref{Thm:FractELeq1} the standard Euler--Lagrange equations \cite{Logan:b}:
\begin{equation}
\label{eq:EL}
\partial_2 F\left(t,q,\dot{q}\right)=\frac{d}{dt}\partial_3 F\left(t,q,\dot{q}\right)\,.
\end{equation}
\end{remark}

\begin{remark}
Our variational Problem~\ref{Pb1} only involves Caputo
fractional derivatives but both Caputo and Riemann-Liouville
fractional derivatives appear in the necessary optimality condition
given by Theorem~\ref{Thm:FractELeq1}. This is different from
\cite{CD:Agrawal:2002,CD:FredericoTorres:2007} where
the necessary conditions only involve the same type of derivatives
(Riemann-Liouville) as those in the definition of the fractional
variational problem.
\end{remark}

\begin{proof} (of Theorem~\ref{Thm:FractELeq1})
According with Definition~\ref{df1},
a necessary condition for $q$ to be a $S_h$-extremal is given by
\begin{multline}
\label{pel}
\int_a^b\Bigl[\partial_{2} L\left(t,q,\dot{q},{_a^CD_t^\alpha}
q\right)\cdot h+\partial_{3} L\left(t,q,\dot{q},{_a^CD_t^\alpha}
q \right)\cdot \dot{h}\\
+\partial_{4} L\left(t,q,\dot{q},{_a^CD_t^\alpha}
q \right)\cdot{_a^CD_t^\alpha}h\Bigr] dt=0\,.
\end{multline}
Using the fact that $h\in S_{h}(a,b)$, and the classical
and Caputo \eqref{integracao:partes}
integration by parts formulas in the second and third
terms of the integrand of \eqref{pel}, respectively, we obtain
\begin{multline*}
\int_a^b\Bigl[\partial_{2} L\left(t,q,\dot{q},{_a^CD_t^\alpha}
q\right)-\frac{d}{dt}\partial_{3} L\left(t,q,\dot{q},{_a^CD_t^\alpha} q\right)\\
+ {_tD_b^\alpha}\partial_{4} L\left(t,q,\dot{q},{_a^CD_t^\alpha}
q\right)\Bigr]\cdot h \,dt=0.
\end{multline*}
Equality \eqref{eq:eldf} follows from the application
of the fundamental lemma of the calculus of variations
(see, \textrm{e.g.}, \cite{CD:Gel:1963}).
\end{proof}


\subsubsection{Fractional Noether's theorem}

A classical result of Emmy Noether provides a relation between
groups of symmetries of a given equation and constants of motion,
i.e. first integrals. Precisely, if a Lagrangian system is invariant
under a group of symmetries then it admits an explicit conservation
law.

 The symmetries are defined via the action of one parameter group of
diffeomorphisms as follows

\begin{definition}(Group of symmetries)
\label{sy} For any real $\varepsilon\,,$ let
$\psi(\varepsilon,\cdot)\,:\mathbb{R}^n\rightarrow\mathbb{R}^n$ be a
diffeomorphism. Then
$\Psi=\left\{\psi(\varepsilon,\cdot)\right\}_{\varepsilon\in\mathbb{R}}$
is a one parameter group of diffeomorphisms of $\mathbb{R}^n$ if it
satisfies:
\begin{enumerate}
\item $\psi (0,\cdot) = Id_{\mathbb{R}^n}$;
\item $\forall \varepsilon,\varepsilon' \in \mathbb{R},
\; \psi (\varepsilon,\cdot) \circ \psi (\varepsilon',\cdot) = \psi
(\varepsilon+\varepsilon',\cdot) $;
\item $\psi(\cdot,\cdot)$ is of class $C^2$ with respect to $\varepsilon\,.$
\end{enumerate}
\end{definition}

Usual examples of one parameter groups of diffeomorphisms are given by translations in a given directions $v$
$$\psi:q \longmapsto q+\varepsilon v\,,\quad q\in\mathbb{R}^n$$
and rotations of angle $\omega$
$$\psi:q \longmapsto qe^{i\varepsilon\omega}\,,\quad q\in\mathbb{C}\,.$$

In \cite{MR2351636,MR2405377} the authors use the related notion of
one parameter family of infinitesimal transformations, instead of
group of diffeomorphisms. They are obtained using a Taylor expansion
of $y_t(\varepsilon)=\psi(\varepsilon,q(t))$ in a neighborhood of
$0\,.$ We obtain
$$y_t(\varepsilon)=\psi(0,q(t))+\varepsilon\frac{\partial \psi}{\partial \varepsilon}(0,q(t))+o(\varepsilon)\,.$$
Having in mind that $\psi (0,\cdot) = Id_{\mathbb{R}^n}\,,$ we
deduce that an infinitesimal transformation is of the form
$$q(t)\longmapsto q(t) + \varepsilon\xi(t,q(t)) + o(\varepsilon)$$
where $\frac{\partial \psi}{\partial
\varepsilon}(0,q(t))=\xi(t,q(t))\,.$

In order to prove a fractional Noether's theorem for
Problem~\ref{Pb1} we adopt a technique used in
\cite{CD:FredericoTorres:2007,CD:Jost:1998}. The proof is done in
two steps: we begin by proving a Noether's theorem without
transformation of the time (without transformation of the
independent variable); then, using a technique of
time-reparametrization, we obtain Noether's theorem in its general
form.

The action of one parameter group of diffeomorphisms on a Lagrangian
allows to define the notion
 of a symmetry for a fractional functional \eqref{Pf}

\begin{definition}(Invariance without transforming the time).
\label{def:inv1:MR} Functional \eqref{Pf} is said to be
$\varepsilon$-invariant under the action of one parameter group of
diffeomorphisms
$\Psi_2=\left\{\psi_2(\varepsilon,\cdot)\right\}_{\varepsilon\in\mathbb{R}}$
of $\mathbb{R}^n$ if it satisfies for any solution $q(\cdot)$ of
\eqref{eq:eldf}
\begin{multline}
\label{eq:invdf}
\int_{t_{a}}^{t_{b}} L\left(t,q(t),\dot{q}(t),{_a^CD_t^\alpha q(t)}\right) dt\\
= \int_{t_{a}}^{t_{b}}
L\left(t,\psi_2(\varepsilon,q(t)),\frac{d\psi_2}{dt}(\varepsilon,q(t)),{_a^CD_t^\alpha
\psi_2(\varepsilon,q(t))}\right) dt
\end{multline}
for any subinterval $[{t_{a}},{t_{b}}] \subseteq [a,b]$.
\end{definition}

The next theorem establishes a necessary condition of invariance.

\begin{theorem}(Necessary condition of invariance).
If functional \eqref{Pf} is invariant, in the sense of
Definition~\ref{def:inv1:MR}, then
\begin{multline}
\label{eq:cnsidf11} \frac{\partial \psi_2}{\partial
\varepsilon}(0,q(t))\cdot
\frac{d}{dt}\partial_{3} L\left(t,q(t),\dot{q}(t),{_a^CD_t^\alpha q(t)}\right)\\
+ \partial_{3}L\left(t,q(t),\dot{q}(t),{_a^CD_t^\alpha q(t)}\right)\cdot
\frac{d}{dt}\frac{\partial \psi_2}{\partial \varepsilon}(0,q(t))\\
+ \partial_{4} L\left(t,q(t),\dot{q}(t),{_a^CD_t^\alpha q(t)}\right)
\cdot {_a^CD_t^\alpha \frac{\partial \psi_2}{\partial \varepsilon}(0,q(t))}\\
- \frac{\partial \psi_2}{\partial \varepsilon}(0,q(t))\cdot
{_tD_b^\alpha}\partial_{4} L\left(t,q(t),\dot{q}(t),{_a^CD_t^\alpha
q(t)}\right) = 0 \, .
\end{multline}
\end{theorem}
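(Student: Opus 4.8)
The plan is to differentiate the invariance identity \eqref{eq:invdf} with respect to the parameter $\varepsilon$ and then evaluate at $\varepsilon=0$. Since the identity holds for every subinterval $[t_a,t_b]\subseteq[a,b]$ and the integrands are $C^2$ in their arguments, I may differentiate under the integral sign; because the left-hand side of \eqref{eq:invdf} is independent of $\varepsilon$, its $\varepsilon$-derivative vanishes, leaving
\begin{equation*}
\frac{d}{d\varepsilon}\int_{t_a}^{t_b} L\Bigl(t,\psi_2(\varepsilon,q),\tfrac{d\psi_2}{dt}(\varepsilon,q),{_a^CD_t^\alpha}\psi_2(\varepsilon,q)\Bigr)\,dt\Big|_{\varepsilon=0}=0.
\end{equation*}
Applying the chain rule inside the integral produces three terms, one for each of the second, third, and fourth slots of $L$; the first slot carries no $\varepsilon$. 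Here I will use that $\partial_\varepsilon$ commutes with both $d/dt$ and with the Caputo operator ${_a^CD_t^\alpha}$ (the latter by linearity of the integral defining the Caputo derivative, so $\partial_\varepsilon\,{_a^CD_t^\alpha}\psi_2={_a^CD_t^\alpha}\,\partial_\varepsilon\psi_2$), and that at $\varepsilon=0$ one has $\psi_2(0,q)=q$ by property (1) of Definition~\ref{sy}, hence $\tfrac{d\psi_2}{dt}(0,q)=\dot q$ and ${_a^CD_t^\alpha}\psi_2(0,q)={_a^CD_t^\alpha}q$.

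After these substitutions, writing $\tfrac{\partial\psi_2}{\partial\varepsilon}(0,q)$ for the evaluated derivative, the arbitrariness of the subinterval $[t_a,t_b]$ lets me drop the integral sign via the fundamental lemma (the same device used in the proof of Theorem~\ref{Thm:FractELeq1}), yielding the pointwise relation
\begin{equation*}
\partial_{2}L\cdot\tfrac{\partial\psi_2}{\partial\varepsilon}(0,q)
+\partial_{3}L\cdot\tfrac{d}{dt}\tfrac{\partial\psi_2}{\partial\varepsilon}(0,q)
+\partial_{4}L\cdot{_a^CD_t^\alpha}\tfrac{\partial\psi_2}{\partial\varepsilon}(0,q)=0,
\end{equation*}
where each $\partial_iL$ is evaluated at $(t,q,\dot q,{_a^CD_t^\alpha}q)$. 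This is the raw consequence of invariance, but it is not yet in the claimed form \eqref{eq:cnsidf11}, which contains no explicit $\partial_2 L$ term.

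The key step is to eliminate $\partial_2 L$ using the Euler--Lagrange equation \eqref{eq:eldf}, which is legitimate because Definition~\ref{def:inv1:MR} stipulates that invariance need only hold along solutions $q(\cdot)$ of \eqref{eq:eldf}. Solving \eqref{eq:eldf} for $\partial_2 L=\tfrac{d}{dt}\partial_3 L-{_tD_b^\alpha}\partial_4 L$ and substituting into the displayed relation replaces the first term by two terms: $\tfrac{\partial\psi_2}{\partial\varepsilon}(0,q)\cdot\tfrac{d}{dt}\partial_3 L$ and $-\,\tfrac{\partial\psi_2}{\partial\varepsilon}(0,q)\cdot{_tD_b^\alpha}\partial_4 L$. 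Combining the first of these with the surviving $\partial_3L\cdot\tfrac{d}{dt}\tfrac{\partial\psi_2}{\partial\varepsilon}(0,q)$ term and keeping $\partial_4 L\cdot{_a^CD_t^\alpha}\tfrac{\partial\psi_2}{\partial\varepsilon}(0,q)$ intact reproduces exactly the four terms of \eqref{eq:cnsidf11}. I expect the main obstacle to be the justification that differentiation under the integral commutes with the Caputo derivative: one must confirm that the mixed partials interchange, which follows from the $C^2$ regularity assumptions in Problem~\ref{Pb1} together with the fact that $\psi_2$ is $C^2$ in $\varepsilon$ (property (3) of Definition~\ref{sy}), so that dominated convergence applies to the integral defining ${_a^CD_t^\alpha}$. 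The algebraic bookkeeping afterward is routine.
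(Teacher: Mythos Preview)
Your proposal is correct and follows essentially the same route as the paper: differentiate the invariance identity at $\varepsilon=0$, commute $\partial_\varepsilon$ with both $d/dt$ and ${_a^CD_t^\alpha}$, and then substitute the Euler--Lagrange equation \eqref{eq:eldf} to replace $\partial_2 L$. The only cosmetic differences are the ordering (the paper first localizes to a pointwise integrand identity using the arbitrariness of $[t_a,t_b]$ and then differentiates, whereas you differentiate under the integral and then localize) and your reference to the ``fundamental lemma'' of Theorem~\ref{Thm:FractELeq1}: the device needed here is not that lemma (which relies on an arbitrary test function $h$) but the more elementary fact that a continuous function whose integral over every subinterval vanishes must itself vanish.
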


\begin{proof}
As condition \eqref{eq:invdf} is valid for any subinterval
$[{t_{a}},{t_{b}}] \subseteq [a,b]$, we can get rid of the integral
signs in \eqref{eq:invdf}. Differentiating this condition with respect to $\varepsilon$, substituting $\varepsilon=0$,
the usual chain rule for the classical derivatives implies

\begin{multline}
\label{eq:SP} 0 = \partial_{2} L\left(t,q,\dot{q},{_a^CD_t^\alpha}
q\right)\cdot\frac{\partial \psi_2}{\partial \varepsilon}(0,q)
+\partial_{3}L\left(t,q,\dot{q},{_a^CD_t^\alpha}
q\right)\cdot\frac{\partial}{\partial\varepsilon}\left[ \frac{d \psi_2}{dt}(\varepsilon,q)\right]\vert_{\varepsilon=0} \\
+ \partial_{4} L\left(t,q,\dot{q},{_a^CD_t^\alpha} q\right)
\cdot\frac{\partial}{\partial\varepsilon}\left[ {_a^CD_t^\alpha}
\psi_1(\varepsilon,q)\right]\vert_{\varepsilon=0} \,.
\end{multline}
Using the definitions and properties of the Caputo fractional
derivatives given in Section~\ref{sec:fdRL} and the fact $d/dt$ and
$_a^CD_t^\alpha$  act on variable $t$ and
$\partial/\partial\varepsilon$ on variable $\varepsilon$, and
$\psi_2(\varepsilon,q)\in C^2$ with respect to $\varepsilon$ (see
Definition~\ref{sy}), we deduce that
 \begin{equation}
 \label{ce}
 \frac{\partial}{\partial\varepsilon}\left[ \frac{d \psi_2}{dt}(\varepsilon,q)\right]\mid_{\varepsilon=0}
 =\frac{d}{dt}\frac{\partial\psi_2}{\partial\varepsilon}(0,q)
 \end{equation}
 and
 \begin{equation}
 \label{ce1}
 \frac{\partial}{\partial\varepsilon}\left[ {_a^CD_t^\alpha} \psi_2(\varepsilon,q)\right]\mid_{\varepsilon=0}
 ={_a^CD_t^\alpha}\frac{\partial\psi_2}{\partial\varepsilon}(0,q)\,.
 \end{equation}
Substituting the quantities  \eqref{ce} and \eqref{ce1} into \eqref{eq:SP}, and using the Euler--Lagrange equation \eqref{eq:eldf}, the necessary
condition of invariance \eqref{eq:SP} is equivalent to
\begin{multline*}
 \frac{\partial \psi_2}{\partial
\varepsilon}(0,q(t))\cdot
\frac{d}{dt}\partial_{3} L\left(t,q(t),\dot{q}(t),{_a^CD_t^\alpha q(t)}\right)\\
+ \partial_{3}L\left(t,q(t),\dot{q}(t),{_a^CD_t^\alpha
q(t)}\right)\cdot
\frac{d}{dt}\frac{\partial \psi_2}{\partial \varepsilon}(0,q(t))\\
+ \partial_{4} L\left(t,q(t),\dot{q}(t),{_a^CD_t^\alpha q(t)}\right)
\cdot {_a^CD_t^\alpha \frac{\partial \psi_2}{\partial \varepsilon}(0,q(t))}\\
- \frac{\partial \psi_2}{\partial \varepsilon}(0,q(t))\cdot
{_tD_b^\alpha}\partial_{4} L\left(t,q(t),\dot{q}(t),{_a^CD_t^\alpha
q(t)}\right) = 0 \, .
\end{multline*}
The proof is completed.
\end{proof}

\begin{theorem}(Transfer formula \cite{CD:BoCrGr}).
\label{trfo} \\Consider functions $f,g\in
C^{\infty}\left([a,b];\mathbb{R}^n\right)$ and assume the following
condition $(\mathcal{C})$: the sequences $\left(g^{(k)}\cdot
_aI_t^{k-\alpha}(f-f(a))\right)_{k\in \mathbb{N}\setminus\{0\}}$ and
$\left(f^{(k)}\cdot _tI_b^{k-\alpha}g\right)_{k\in
\mathbb{N}\setminus\{0\}}$ converge uniformly to $0$ on $[a,b]$.
Then, the following equality holds:
\begin{multline*}
g\cdot {_a^CD_t^\alpha}f-f\cdot{_tD_b^\alpha}g\\
=\frac{d}{dt}\left[\sum_{r=0}^{\infty}\left((-1)^{r}g^{(r)} \cdot
{_aI_t}^{r+1-\alpha}(f-f(a))+f^{(r)} \cdot
{_tI_b}^{r+1-\alpha}g\right)\right].
\end{multline*}
\end{theorem}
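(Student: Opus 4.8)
The plan is to establish the identity by splitting its left-hand side into the two pieces $g\cdot {}_a^{C}D_t^{\alpha}f$ and $-f\cdot {}_tD_b^{\alpha}g$, showing that each equals the $t$-derivative of the corresponding series, and then adding the results. The first move is to trade the Caputo and right Riemann--Liouville operators for ordinary derivatives of fractional integrals, using the relations recalled in Section~\ref{sec:fdRL}: writing $\tilde f:=f-f(a)$, one has ${}_a^{C}D_t^{\alpha}f={}_aD_t^{\alpha}\tilde f=\frac{d}{dt}\,{}_aI_t^{1-\alpha}\tilde f$ and ${}_tD_b^{\alpha}g=-\frac{d}{dt}\,{}_tI_b^{1-\alpha}g$. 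This recasts both pieces in a form to which the ordinary Leibniz product rule applies directly.

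The core of the argument is an iterated integration by parts producing a finite expansion with a remainder. The two elementary facts I would use are the differentiation rules for the fractional integrals, namely $\frac{d}{dt}\,{}_aI_t^{\beta}h={}_aI_t^{\beta-1}h$ and $\frac{d}{dt}\,{}_tI_b^{\beta}h=-{}_tI_b^{\beta-1}h$, both valid for $\beta>1$ (so that the boundary contribution in the Leibniz integral rule vanishes). The sign discrepancy between the left and right rules is precisely what produces the alternating factor $(-1)^r$ in the first series while leaving the second with all-positive coefficients. Starting from $g\cdot\frac{d}{dt}\,{}_aI_t^{1-\alpha}\tilde f$ and peeling off one total derivative at a time, a straightforward induction on $N$ gives
\begin{equation*}
g\cdot {}_a^{C}D_t^{\alpha}f=\frac{d}{dt}\left[\sum_{r=0}^{N-1}(-1)^r g^{(r)}\cdot {}_aI_t^{r+1-\alpha}\tilde f\right]+(-1)^N g^{(N)}\cdot {}_aI_t^{N-\alpha}\tilde f,
\end{equation*}
and an entirely parallel computation for $-f\cdot {}_tD_b^{\alpha}g$ yields the finite sum $\sum_{r=0}^{N-1}f^{(r)}\cdot {}_tI_b^{r+1-\alpha}g$ with remainder $-f^{(N)}\cdot {}_tI_b^{N-\alpha}g$. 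Each inductive step raises the fractional-integration order by one, so from the second step onward the indices stay above $1$ and the two differentiation rules apply throughout.

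It remains to send $N\to\infty$, and this is exactly where hypothesis $(\mathcal{C})$ enters: it asserts that the two remainder sequences $g^{(N)}\cdot {}_aI_t^{N-\alpha}\tilde f$ and $f^{(N)}\cdot {}_tI_b^{N-\alpha}g$ tend to $0$ uniformly on $[a,b]$, so the derivatives of the partial sums converge uniformly to $g\cdot {}_a^{C}D_t^{\alpha}f$ and $-f\cdot {}_tD_b^{\alpha}g$ respectively. Since each partial sum of the first series vanishes at $t=a$ and each partial sum of the second vanishes at $t=b$ (the fractional integrals collapse at their base point), the partial sums themselves converge at a point, and the standard theorem on term-by-term differentiation of uniformly convergent series lets me pass the $\frac{d}{dt}$ through the infinite sums. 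Adding the two limiting identities then gives the assertion. The main obstacle is not the algebra of the induction but precisely this analytic interchange of derivative and limit, which cannot be carried out termwise without control on the tails; condition $(\mathcal{C})$ is tailored to supply exactly the uniform decay of the remainders that legitimizes it.
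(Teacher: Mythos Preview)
The paper does not supply its own proof of this theorem: the transfer formula is quoted from Bourdin \cite{CD:BoCrGr} and stated without argument, so there is no in-paper proof to compare against. Your task, strictly speaking, was to reconstruct something the authors simply imported.

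That said, your reconstruction is sound and is in fact the standard route to this identity. Rewriting ${}_a^{C}D_t^{\alpha}f=\frac{d}{dt}\,{}_aI_t^{1-\alpha}(f-f(a))$ and ${}_tD_b^{\alpha}g=-\frac{d}{dt}\,{}_tI_b^{1-\alpha}g$, then iterating the product rule together with $\frac{d}{dt}\,{}_aI_t^{\beta}h={}_aI_t^{\beta-1}h$ and $\frac{d}{dt}\,{}_tI_b^{\beta}h=-{}_tI_b^{\beta-1}h$, produces exactly the finite expansions you wrote, with remainders $(-1)^N g^{(N)}\cdot{}_aI_t^{N-\alpha}\tilde f$ and $-f^{(N)}\cdot{}_tI_b^{N-\alpha}g$. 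Your observation that the sign asymmetry between the left and right differentiation rules is what generates the $(-1)^r$ in one series but not the other is correct, as is the remark that the integral orders exceed $1$ from the second step on. The passage $N\to\infty$ is handled appropriately: condition $(\mathcal{C})$ gives uniform vanishing of the remainders, hence uniform convergence of the derivatives of the partial sums, and the vanishing of each partial sum at an endpoint supplies the pointwise convergence needed to invoke the standard differentiation-of-limits theorem. This matches the argument in the cited reference.
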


\begin{theorem}(Fractional Noether's theorem without transformation of time).
\label{theo:tnadf1} If functional \eqref{Pf} is invariant in the
sense of Definition~\ref{def:inv1:MR} and functions $\frac{\partial
\psi_2}{\partial \varepsilon}(0,q)$ and $\partial_{4} L$ satisfy
condition $(\mathcal{C})$ of Theorem~\ref{trfo}, then
\begin{multline}\label{TN}
\frac{d}{dt}\Biggl[f_2\cdot\partial_{3} L
+\sum_{r=0}^{\infty}\Bigl((-1)^{r}\partial_{4} L^{(r)}
\cdot {_aI_t}^{r+1-\alpha}(f_2-f_2(a))\\
+f_2^{(r)}\cdot {_tI_b}^{r+1-\alpha}\partial_{4} L\Bigr)\Biggr] = 0
\end{multline}
along any fractional $S_h$-extremal with classical and Caputo
derivatives $q(\cdot)$, $t \in [a,b]$ (Definition~\ref{df1}). In
\eqref{TN} $f_2$ denote $\frac{\partial \psi_2}{\partial
\varepsilon}(0,q)\,.$
\end{theorem}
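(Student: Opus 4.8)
The plan is to obtain \eqref{TN} by combining two results already in hand: the necessary condition of invariance \eqref{eq:cnsidf11} and the transfer formula of Theorem~\ref{trfo}. The guiding observation is that the four terms of \eqref{eq:cnsidf11} split into two natural groups. The first pair is already a total classical derivative by the Leibniz rule, while the second pair has the mixed Caputo/Riemann--Liouville structure $g\cdot{_a^CD_t^\alpha}f - f\cdot{_tD_b^\alpha}g$ that Theorem~\ref{trfo} is designed to rewrite as a total derivative. So the whole proof amounts to recognizing this splitting and applying the transfer formula once.

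Writing $f_2 := \frac{\partial \psi_2}{\partial \varepsilon}(0,q)$ and reading \eqref{eq:cnsidf11} along an extremal, I would first group the two terms containing $\partial_{3}L$. By the ordinary product rule,
\[
f_2\cdot \frac{d}{dt}\partial_{3} L + \partial_{3}L\cdot \frac{d}{dt}f_2 = \frac{d}{dt}\bigl(f_2\cdot\partial_{3}L\bigr),
\]
which accounts for the leading term inside the bracket of \eqref{TN}. I would then treat the remaining two terms,
\[
\partial_{4} L\cdot {_a^CD_t^\alpha}f_2 - f_2\cdot {_tD_b^\alpha}\partial_{4} L,
\]
by invoking Theorem~\ref{trfo} with the identifications $g=\partial_{4}L$ and $f=f_2$. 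The hypothesis of the present theorem is exactly that $f_2$ and $\partial_{4}L$ satisfy condition $(\mathcal{C})$, so the transfer formula applies verbatim and converts this difference into
\[
\frac{d}{dt}\left[\sum_{r=0}^{\infty}\left((-1)^{r}(\partial_{4}L)^{(r)}\cdot {_aI_t}^{r+1-\alpha}(f_2-f_2(a)) + f_2^{(r)}\cdot {_tI_b}^{r+1-\alpha}\partial_{4}L\right)\right],
\]
which is precisely the infinite-series part of the bracket in \eqref{TN} (here $\partial_{4} L^{(r)}$ is read as $(\partial_{4}L)^{(r)}$). Adding the two total derivatives and using that the left-hand side of \eqref{eq:cnsidf11} vanishes along any $S_h$-extremal yields $\frac{d}{dt}[\,\cdots\,]=0$ with the bracket as claimed, completing the argument.

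The least routine step, and the one I would flag as the main obstacle, is the application of the transfer formula rather than any algebra that precedes it: condition $(\mathcal{C})$ is exactly what guarantees that the infinite sequence of fractional integrals converges uniformly so that the series may be differentiated termwise and genuinely collapses to a single classical total derivative. Everything else reduces to the Leibniz rule and the already-established necessary condition of invariance, so once $(\mathcal{C})$ is assumed the proof is essentially a two-line substitution.
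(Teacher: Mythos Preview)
Your proof is correct and follows exactly the approach of the paper, which simply states that one combines equation \eqref{eq:cnsidf11} with Theorem~\ref{trfo}. You have merely spelled out in detail what that one-line proof means: grouping the $\partial_3 L$ terms via the Leibniz rule and applying the transfer formula with $g=\partial_4 L$, $f=f_2$ to the remaining two terms.
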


\begin{proof}
We combine equation \eqref{eq:cnsidf11} and Theorem~\ref{trfo}.
\end{proof}

The next definition gives a more general notion
of invariance for the integral functional \eqref{Pf}.
The main result of this section, the
Theorem~\ref{theo:tndf}, is formulated
with the help of this definition.

\begin{definition}(Invariance of \eqref{Pf}).
\label{def:invadf}
 Functional \eqref{Pf} is said to be
$\varepsilon$-invariant under the action of one parameter group of
diffeomorphisms
$\Psi_{i=1,2}=\left\{\psi_i(\varepsilon,\cdot)\right\}_{\varepsilon\in\mathbb{R}}$
of $\mathbb{R}^{1+n}$ if it satisfies for any solution $q(\cdot)$ of
\eqref{eq:eldf}
\begin{multline}
\label{eq:invdf1}
\int_{t_{a}}^{t_{b}} L\left(t,q(t),\dot{q}(t),{_a^CD_t^\alpha q(t)}\right) dt\\
= \int_{\psi_1(\varepsilon,t_{a})}^{\psi_1(\varepsilon,t_{b})}
L\left(\psi_1(\varepsilon,t),\psi_2(\varepsilon,q(t)),\frac{\dot{\psi_2}(\varepsilon,q(t))}{\dot{\psi_1}(\varepsilon,t)},{_{\bar{a}}^CD_{\bar{t}}^\alpha
\psi_2(\varepsilon,q(t))}\right)\dot{\psi_1}(\varepsilon,q(t))dt
\end{multline}
for any subinterval $[{t_{a}},{t_{b}}] \subseteq [a,b]$, where $\dot{\psi_i}=\frac{d\psi_i}{dt}$, $i=1,2$, $\bar{a}=\psi_1(\varepsilon,t_{a})$ and $\bar{t}=\psi_1(\varepsilon,t)\,.$
\end{definition}

Our next result gives a general form of Noether's theorem for
fractional problems of the calculus of variations with classical and
Caputo derivatives.

\begin{theorem}(Fractional Noether's theorem with classical and Caputo derivatives).
\label{theo:tndf} If functional \eqref{Pf} is invariant, in the
sense of Definition~\ref{def:invadf}, and functions $\frac{\partial
\psi_2}{\partial \varepsilon}(0,q)$ and $\partial_{4} L$ satisfy
condition $(\mathcal{C})$ of Theorem~\ref{trfo}, then
\begin{multline}
\label{eq:LC:Frac:RL1} \frac{d}{dt}\Biggl[f_2\cdot\partial_{3} L
+\sum_{r=0}^{\infty}\Bigl((-1)^{r}\partial_{4} L^{(r)}
\cdot {_aI_t}^{r+1-\alpha}(f_2-f_2(a))\\
+f_2^{(r)}\cdot {_tI_b}^{r+1-\alpha}\partial_{4} L\Bigr)\\
+ \tau\Bigl(L-\dot{q}\cdot\partial_{3} L
-\alpha\partial_{4} L\cdot{_a^CD_t}^{\alpha}q\Bigr)\Biggr]= 0
\end{multline}
along any fractional $S_{h}$-extremal with classical and Caputo
derivatives $q(\cdot)$, $t \in [a,b]\,.$ Here and the sequel $f_2$
and $\tau$ denote $\frac{\partial \psi_2}{\partial
\varepsilon}(0,q)$ and $\frac{\partial \psi_1}{\partial
\varepsilon}(0,t)$, respectively.
\end{theorem}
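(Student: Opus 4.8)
The plan is to follow the two-step strategy announced in the text: having already proved Noether's theorem \emph{without} transformation of time (Theorem~\ref{theo:tnadf1}), I would reduce the general case of Definition~\ref{def:invadf} to that special case by a time-reparametrization trick. The idea is to introduce a new independent variable $\sigma$ and treat $t$ itself as an additional dependent variable $t(\sigma)$, so that the transformation of time $\psi_1(\varepsilon,t)$ becomes an ordinary transformation of a (now spatial) coordinate to which the fixed-time result applies.

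Concretely, first I would set $t=\sigma\cdot t'(\sigma)$-type Lipschitz reparametrization $t=t(\sigma)$, $\sigma\in[\sigma_a,\sigma_b]$, with $\frac{dt}{d\sigma}=t'$, and rewrite the functional \eqref{Pf} as
\begin{equation*}
\bar{I}[t(\cdot),q(\cdot)]=\int_{\sigma_a}^{\sigma_b}
L\!\left(t,q,\frac{q'}{t'},{_a^CD_t^\alpha}q\right) t'\,d\sigma
=:\int_{\sigma_a}^{\sigma_b}\bar{L}\,d\sigma,
\end{equation*}
where $q'=\frac{dq}{d\sigma}$ and the Caputo derivative must be re-expressed in the $\sigma$ variable. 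The key technical point is that under $t=t(\sigma)$ the Caputo operator ${_a^CD_t^\alpha}q$ is not simply rescaled; I would use the change-of-variable behaviour of the fractional integral together with $\frac{d}{d\sigma}=t'\frac{d}{dt}$ to justify that, to first order in $\varepsilon$ and after setting $t'=1$ at $\varepsilon=0$, the extra contribution from differentiating ${_a^CD_{\bar t}^\alpha}$ with respect to the time transformation produces exactly the term $-\alpha\,\partial_4 L\cdot{_a^CD_t^\alpha}q$. This is the factor $\alpha$ appearing in \eqref{eq:LC:Frac:RL1} and it is the main obstacle: unlike the classical case, the homogeneity of the fractional derivative under time scaling carries the weight $\alpha$ rather than $1$, and getting this coefficient right requires careful handling of the scaling of $(t-\theta)^{-\alpha}$ in the Riemann--Liouville integral.

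Next I would observe that the augmented functional $\bar I$ is invariant \emph{without} transformation of the (new) time $\sigma$ under the group $\Psi_{1,2}$ now acting on the enlarged configuration space $(t,q)\in\mathbb{R}^{1+n}$, since the reparametrization has absorbed $\psi_1$ into a coordinate transformation. Applying Theorem~\ref{theo:tnadf1} to $\bar I$ then yields a conservation law for the enlarged system. The two groups of generators are $f_2=\frac{\partial\psi_2}{\partial\varepsilon}(0,q)$ in the $q$-directions and $\tau=\frac{\partial\psi_1}{\partial\varepsilon}(0,t)$ in the new $t$-direction. The $q$-part of the resulting conserved quantity reproduces verbatim the bracketed sum $f_2\cdot\partial_3 L+\sum_r(\cdots)$ already present in \eqref{TN}, while the $\tau$-part supplies the energy-type term $\tau\bigl(L-\dot q\cdot\partial_3 L-\alpha\,\partial_4 L\cdot{_a^CD_t^\alpha}q\bigr)$, which is the fractional analogue of the classical Hamiltonian $L-\dot q\cdot\partial_3 L$.

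Finally I would pass back from $\sigma$ to $t$ by setting $t'=1$ (equivalently $\sigma=t$) after differentiation, which is legitimate because the conservation law $\frac{d}{d\sigma}(\cdots)=0$ transforms into $\frac{d}{dt}(\cdots)=0$ under the monotone reparametrization, giving precisely \eqref{eq:LC:Frac:RL1} along any $S_h$-extremal. Throughout, I would invoke the necessary condition of invariance and the Euler--Lagrange equation \eqref{eq:eldf} to eliminate the $\partial_2 L$ terms, exactly as in the proof of Theorem~\ref{theo:tnadf1}, and use condition $(\mathcal{C})$ to guarantee convergence of the infinite series produced by the transfer formula of Theorem~\ref{trfo}. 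The only genuinely delicate step, as noted, is verifying the scaling identity that yields the coefficient $\alpha$ on the ${_a^CD_t^\alpha}q$ term; everything else is a first-order Taylor expansion in $\varepsilon$ combined with the already-established machinery.
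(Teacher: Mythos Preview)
Your proposal is correct and follows essentially the same approach as the paper: reparametrize time via $t=t(\sigma)$ to turn the functional into an autonomous one in the enlarged configuration $(t,q)$, apply the already-proved no-time-transformation Noether result (Theorem~\ref{theo:tnadf1}), and then evaluate at $t'_\sigma=1$. The paper makes the ``delicate step'' you flag explicit by computing ${_{\sigma_a}^C D_{t(\sigma)}^\alpha}q=(t'_\sigma)^{-\alpha}\,{_\chi^C D_\sigma^\alpha}q$ (with a shifted lower terminal $\chi=a/(t'_\sigma)^2$), so that differentiating the prefactor $(t'_\sigma)^{-\alpha}$ in $\bar L_f$ with respect to $t'_\sigma$ at $t'_\sigma=1$ produces exactly the coefficient $-\alpha$ in the energy-type term, confirming your anticipated computation.
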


\begin{proof}
Our proof is an extension of the method used in \cite{CD:Jost:1998}.
For that we reparametrize the time (the independent variable $t$) by
the Lipschitz transformation
\begin{equation*}
[a,b]\ni t\longmapsto \sigma f(\lambda) \in [\sigma_{a},\sigma_{b}]
\end{equation*}
that satisfies
\begin{equation}
\label{eq:condla} t_{\sigma}^{'} =\frac{dt(\sigma)}{d\sigma}=
f(\lambda) = 1\,\, if\,\, \lambda=0\,.
\end{equation}
Functional \eqref{Pf} is reduced, in this way, to an autonomous
functional:
\begin{multline}
\label{eq:tempo}
\bar{I}[t(\cdot),q(t(\cdot))]\\
= \int_{\sigma_{a}}^{\sigma_{b}}
L\left(t(\sigma),q(t(\sigma)),\dot{q}(t(\sigma)),
{_{\sigma_{a}}^CD_{t(\sigma)}^{\alpha}q(t(\sigma))}
\right)t_{\sigma}^{'} d\sigma ,
\end{multline}
where $t(\sigma_{a}) = a$ and $t(\sigma_{b}) = b$. Using the
definitions and properties of fractional derivatives given in
Section~\ref{sec:fdRL}, we get successively that
\begin{equation*}
\begin{split}
_{\sigma_{a}}^C&D_{t(\sigma)}^{\alpha}q(t(\sigma))\hspace*{-0.5cm}\\
&= \frac{1}{\Gamma(1-\alpha)} \int_{\frac{a}{f(\lambda)}}^{\sigma
f(\lambda)}\left({\sigma
f(\lambda)}-\theta\right)^{-\alpha}\frac{d}{d\theta}
q\left(\theta f^{-1}(\lambda)\right)d\theta    \\
&= \frac{(t_{\sigma}^{'})^{-\alpha}}{\Gamma(1-\alpha)}
\int_{\frac{a}{(t_{\sigma}^{'})^{2}}}^{\sigma}
(\sigma-s)^{-\alpha}\frac{d}{ds}
q(s)ds  \\
&= (t_{\sigma}^{'})^{-\alpha}\,\,{{^C_\chi
D_{\sigma}^{\alpha}q(\sigma)}},\,\left(
\chi={\frac{a}{(t_{\sigma}^{'})^{2}}}\right)\, .
\end{split}
\end{equation*}

We then have
\begin{equation*}
\begin{split}
&\bar{I}[t(\cdot),q(t(\cdot))] \\
&= \int_{\sigma_{a}}^{\sigma_{b}}
L\left(t(\sigma),q(t(\sigma)),\frac{q_{\sigma}^{'}}{t_{\sigma}^{'}},(t_{\sigma}^{'})^{-\alpha}\,\,{^C_\chi
D_{\sigma}^{\alpha}q(\sigma)}\right) t_{\sigma}^{'} d\sigma \\
&\doteq \int_{\sigma_{a}}^{\sigma_{b}}
\bar{L}_{f}\left(t(\sigma),q(t(\sigma)),q_{\sigma}^{'},t_{\sigma}^{'},{^C_\chi
D_{\sigma}^{\alpha}} q(t(\sigma))\right)d\sigma \\
&= \int_a^b L\left(t,q(t),\dot{q}(t),{_a^CD_t}^{\alpha} q(t)\right) dt \\
&= I[q(\cdot)] \, .
\end{split}
\end{equation*}
If the integral functional \eqref{Pf} is invariant in the sense of
Definition~\ref{def:invadf}, then the integral functional
\eqref{eq:tempo} is invariant in the sense of
Definition~\ref{def:inv1:MR}. It follows from
Theorem~\ref{theo:tnadf1} that
\begin{multline}
\label{eq:tnadf2} \frac{d}{dt}\Biggl[f_2\cdot\partial_{3}
\bar{L}_{f}+\tau\frac{\partial}{\partial t'_\sigma} \bar{L}_{f}
+\sum_{r=0}^{\infty}\Bigl((-1)^{r}
\partial_{5} \bar{L}_{f}^{(r)}\cdot {_aI_t}^{r+1-\alpha}(f_2-f_2(a))\\
+f_2^{(r)}\cdot {_tI_b}^{r+1-\alpha}\partial_{5}
\bar{L}_{f}\Bigr)\Biggr]= 0 \, .
\end{multline}
For $\lambda = 0$, the condition \eqref{eq:condla} allow us to write
that
\begin{equation*}
{^C_\chi D_{\sigma}^{\alpha}q(\sigma)}
 = {_a^CD_t}^{\alpha} q(t)
\end{equation*}
and, therefore, we get
\begin{equation}
\label{eq:prfMR:q1}
\begin{cases}
\partial_{3}\bar{L}_{f}
=\partial_{3} L,\\
\partial_{5}\bar{L}_{f}
=\partial_{4} L,
\end{cases}
\end{equation}
and
\begin{equation}
\label{eq:prfMR:q2}
\begin{split}
&\frac{\partial}{\partial t'_\sigma} \bar{L}_{f} = L +
\partial_{3}{\bar{L}_{f}} \cdot t_{\sigma}^{'}
\frac{\partial}{\partial t_{\sigma}^{'}}
\frac{q_{\sigma}^{'}}{t_{\sigma}^{'}}+\partial_{4}{\bar{L}_{f}}\\
&\times \frac{\partial}{\partial t_{\sigma}^{'}}\left[
\frac{(t_{\sigma}^{'})^{-\alpha}}{\Gamma(1-\alpha)}
\int_{\frac{a}{(t_{\sigma}^{'})^{2}}}^{\sigma}
(\sigma-s)^{-\alpha}\frac{d}{ds} q(s)ds\right]t_{\sigma}^{'}\\
&=-\dot{q}\cdot\partial_{3} L -\alpha\partial_{4}
L\cdot{_a^CD_t}^{\alpha}q + L \, .
\end{split}
\end{equation}
We obtain \eqref{eq:LC:Frac:RL1} substituting \eqref{eq:prfMR:q1}
and \eqref{eq:prfMR:q2} into equation \eqref{eq:tnadf2}.
\end{proof}

Theorem~\ref{theo:tndf} gives a new and interesting result for autonomous fractional
variational problems. Let us consider an autonomous fractional variational problem \textrm{i.e.}, the case when function $L$
 of \eqref{Pf}
do not depends explicitly on the independent variable $t$:
\begin{gather}
I[q(\cdot)] =\int_a^b L\left(q(t),\dot{q}(t),{_a^CD_t}^{\alpha} q(t)\right) dt
\longrightarrow \min \, . \label{eq:FOCP:CO4}
\end{gather}

\begin{corollary}
\label{cor:FOCP:CV}
For the autonomous fractional problem \eqref{eq:FOCP:CO4} one has
\begin{equation*}
 \frac{d}{dt}\Bigl(L-\dot{q}\cdot\partial_{3} L
-\alpha\partial_{4} L\cdot{_a^CD_t}^{\alpha}q\Bigr)= 0
\end{equation*}
along any fractional $S_{h}$-extremal with classical and Caputo
derivatives $q(\cdot)$, $t \in [a,b]\,.$
\end{corollary}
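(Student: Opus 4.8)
The plan is to obtain Corollary~\ref{cor:FOCP:CV} as the specialization of the general Fractional Noether's theorem (Theorem~\ref{theo:tndf}) to the symmetry generated by pure time translations, which is available precisely because the Lagrangian in \eqref{eq:FOCP:CO4} carries no explicit dependence on $t$.

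First I would introduce the one-parameter group of diffeomorphisms of $\mathbb{R}^{1+n}$ given by
$$\psi_1(\varepsilon,t)=t+\varepsilon,\qquad \psi_2(\varepsilon,q)=q,$$
which manifestly satisfies the three axioms of Definition~\ref{sy}. I would then verify that functional \eqref{eq:FOCP:CO4} is $\varepsilon$-invariant in the sense of Definition~\ref{def:invadf} under this group. Since $\dot{\psi_1}\equiv 1$ and $\psi_2$ acts trivially on $q$, the right-hand side of \eqref{eq:invdf1} reduces to the integral, over the translated interval, of $L$ evaluated at $q(t)$, $\dot{q}(t)$ and the Caputo derivative of $q$ with both its lower terminal and its upper evaluation point shifted by $\varepsilon$; because $L$ is autonomous and this shifted operator coincides, after the substitution $\bar{t}=t+\varepsilon$, with ${_a^CD_t^\alpha}q(t)$, the right-hand side returns exactly the left-hand side. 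This is nothing but the time-reparametrization computation already carried out in the proof of Theorem~\ref{theo:tndf}, specialized to $f(\lambda)\equiv 1$.

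Next I would read off the infinitesimal generators entering \eqref{eq:LC:Frac:RL1},
$$\tau=\frac{\partial\psi_1}{\partial\varepsilon}(0,t)=1,\qquad f_2=\frac{\partial\psi_2}{\partial\varepsilon}(0,q)=0.$$
Because $f_2\equiv 0$, condition $(\mathcal{C})$ of Theorem~\ref{trfo} holds trivially, every sequence appearing there vanishing identically, so Theorem~\ref{theo:tndf} applies. Substituting $\tau=1$ and $f_2=0$ into \eqref{eq:LC:Frac:RL1}, every term carrying a factor $f_2$, a derivative $f_2^{(r)}$, or the fractional integral of $f_2-f_2(a)=0$ drops out, and what survives is precisely
$$\frac{d}{dt}\Bigl(L-\dot{q}\cdot\partial_{3} L-\alpha\partial_{4} L\cdot{_a^CD_t}^{\alpha}q\Bigr)=0,$$
the asserted constant of motion along any fractional $S_h$-extremal.

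The only step demanding genuine attention is the invariance check, where one must confirm that shifting the lower terminal $a$ of the Caputo derivative together with the independent variable leaves the fractional derivative invariant; everything else is immediate substitution. Since that invariance is exactly what the time-reparametrization in the proof of Theorem~\ref{theo:tndf} establishes, no new estimate is required, and the corollary follows at once.
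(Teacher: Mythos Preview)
Your proposal is correct and matches the paper's own proof: both select the time-translation group $\psi_1(\varepsilon,t)=t+\varepsilon$, $\psi_2(\varepsilon,q)=q$, verify invariance by showing that the shifted Caputo derivative ${_{\bar a}^CD_{\bar t}^\alpha}q$ coincides with ${_a^CD_t^\alpha}q$ via the substitution $\theta\mapsto\theta-\varepsilon$, and then read off the conservation law from Theorem~\ref{theo:tndf} with $\tau=1$ and $f_2\equiv 0$. One minor quibble: your aside that this invariance is ``the time-reparametrization computation \ldots\ specialized to $f(\lambda)\equiv 1$'' is not accurate---that specialization yields the identity map, not a translation---but your actual substitution argument is independent of this remark and is exactly the computation the paper carries out.
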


\begin{proof}
As the Lagrangian  $L$ does not depend explicitly on the independent
variable $t$, we can easily see that \eqref{eq:FOCP:CO4} is
invariant under translation of the time variable: the condition of
invariance \eqref{eq:invdf1} is satisfied with
$\psi_1(\varepsilon,t) = t+\varepsilon$ and
$\psi_2(\varepsilon,q(t))= q(t)$. Indeed, given that
$\frac{d\psi_1}{dt}(\varepsilon,q(t)) = 1$, $\tau=1$ and $f_2=0$, the invariance condition
\eqref{eq:invdf1} is verified if ${_{\bar{a}}^CD_{\bar{t}}^\alpha
\psi_2(\varepsilon,q(t))} = {_a^CD_t^\alpha} q(t)$. This is true
because
\begin{equation*}
\begin{split}
{_{\bar{a}}^CD_{\bar{t}}^\alpha
\psi_2(\varepsilon,q(t))}
&=\frac{1}{\Gamma(1-\alpha)} \int_{\bar{a}}^{\bar{t}}
(\bar{t}-\theta)^{-\alpha}\frac{d}{d\theta}
\psi_2(\varepsilon,q(\theta))d\theta
\\
&=\frac{1}{\Gamma(1-\alpha)} \int_{a+\varepsilon}^{t+\varepsilon}
(t+\varepsilon-\theta)^{-\alpha}\frac{d}{d\theta}
\psi_2(\varepsilon,q(\theta))d\theta\\
&=\frac{1}{\Gamma(1-\alpha)} \int_{a}^{t}
(t-s)^{-\alpha}\frac{d}{ds}
\psi_2(\varepsilon,q(s+\varepsilon))ds\\
&={_a^CD^\alpha_{t}\psi_2(\varepsilon,q(t+\varepsilon))}={_a^CD^\alpha_{t}\psi_2(\varepsilon,q(t}))\\
&={_a^CD^\alpha_{t}{q}(t)}\, .
\end{split}
\end{equation*}
\end{proof}

\begin{remark}
If $\alpha=1$ Problem \eqref{eq:FOCP:CO4} is reduced to the
classical problem of the calculus of variations,
\begin{equation*}
\label{eq:pbcv11}
 I[q(\cdot)] = \int_a^b
F\left(q(t),\dot{q}(t)\right) \longrightarrow \min
\end{equation*}
with $F\left(q(t),\dot{q}(t)\right):=L\left(q(t),\dot{q}(t),\dot{q}(t)\right)$, and one obtains from Corollary~\ref{cor:FOCP:CV} the famous conservation of energy in classical mechanics:
$$F-\dot{q}\cdot \frac{\partial F}{\partial \dot{q}}=constant$$ along any solutions of the equations \eqref{eq:EL}.
\end{remark}

\section{Noether's theorem for the linear friction problem}
\label{example}

In order to formulate an action principle for dissipative systems free from the problems found in the original Riewe's approach, in a recent work \cite{LazoCesar} it was proposed that the equation of motion for dissipative systems can be obtained by taking the limit $a\rightarrow b$ with $t=a+(b-a)/2=(a+b)/2$ in the extremal of the action
\begin{equation}
\label{b8}
I[q(\cdot)]=\int_a^b {L}\left(q(t),\dot{q}(t),{_a^C D^{\alpha}_t} q(t)\right) dt.
\end{equation}
Furthermore, it was proposed a quadratic Lagrangian for a particle under a frictional force proportional to the velocity as \cite{LazoCesar}
\begin{equation}
\label{b10}
L\left(q(t),\dot{q}(t),{_a^C D^{\frac{1}{2}}_t} q(t)\right)=\frac{1}{2}m\left(\dot{q}(t)\right)^2-U(q(t))+\frac{\gamma}{2}\left({_a^C D^{\frac{1}{2}}_t} q(t)\right)^2,
\end{equation}
where the three terms in \eqref{b10} represent the kinetic energy, potential energy, and the fractional linear friction energy, respectively. Since the equation of motion is obtained in the limit $a\rightarrow b$, if we consider the last term in \eqref{b10} up to first order in $\Delta t=b-a$ we get:
\begin{equation}
\label{b11}
\frac{\gamma}{2}\left({_a^C D^{\frac{1}{2}}_t} q\right)^2\approx \frac{\gamma}{2}\left(\frac{\Gamma(1)}{\Gamma(\frac{3}{2})}\right)^2\left(\dot{q}\right)^2\Delta t \approx \frac{2}{\pi} \gamma \dot{q} \Delta q,
\end{equation}
that coincide, apart from the multiplicative constant $2/\pi$, with the work from the frictional force $\gamma \dot{q}$ in the displacement $\Delta q \approx \dot{q}\Delta t$. This additional constant is a consequence of the use of fractional derivatives in the Lagrangian and do not appears in the equation of motion after we apply the action principle \cite{LazoCesar}. Furthermore, the Lagrangian \eqref{b10} is physical in the sense it provide us with physically meaningful relations for the momentum and the canonical Hamiltonian \cite{LazoCesar}
\begin{equation}
\label{b14}
H=p\dot{q}+p_{\frac{1}{2}}{_a^C D^{\frac{1}{2}}_t} q-L=\frac{1}{2}m\left(\dot{q}\right)^2+U(q)+\frac{\gamma}{2}\left({_a^C D^{\frac{1}{2}}_t} q\right)^2,
\end{equation}
where
\begin{equation}
\label{b13}
p=\frac{\partial L}{\partial \dot{q}}=m\dot{q}, \;\;\; p_{\frac{1}{2}}=\frac{\partial L}{\partial {_a^C D^{\frac{1}{2}}_t} q}=\gamma{_a^C D^{\frac{1}{2}}_t} q.
\end{equation}
From \eqref{b13} and \eqref{b14} we can see that the Lagrangian \eqref{b10} is physical in the sense it provides us a correct relation for the momentum $p_1=m\dot{q}$, and a physically meaningful Hamiltonian (it is the sum of all energies). Furthermore, the additional fractional momentum $p_{\frac{1}{2}}=\gamma{_a^C D^{\frac{1}{2}}_t} q$ goes to zero when we takes the limit $a\rightarrow b$ \cite{LazoCesar}.

Finally, the equation of motion for the particle is obtained by inserting our Lagrangian \eqref{b10} into the Euler-Lagrange equation \eqref{eq:eldf},
\begin{equation}
\label{b15}
m \ddot{q}-\gamma{_t D^{\frac{1}{2}}_b} {_a^C D^{\frac{1}{2}}_t}q=F(q),
\end{equation}
where $F(q)=-\frac{d}{dq}U(q)$ is the external force. By taking the limit $a\rightarrow b$ with $t=(a+b)/2$ and using the approximation ${_a^C D^{\frac{1}{2}}_t}q \approx -{_t^C D^{\frac{1}{2}}_b}q$ \cite{LazoCesar} we obtain the equation of motion for a particle under a linear friction force
\begin{equation}
\label{b16}
m \ddot{q}+\gamma \dot{q}=F(q).
\end{equation}

Finally, Noether's invariant theorems states that if an action remains invariant with respect to a group of transformations, such transformations leads to a corresponding conservation law. Since for the Lagrangian \eqref{b10} the linear friction is an autonomous fractional problem, Corollary \ref{cor:FOCP:CV} gives us
\begin{equation}
\label{b17}
 \frac{d}{dt}\Bigl(L-p\dot{q}
-\frac{1}{2}p_{\frac{1}{2}}\cdot{_a^CD_t}^{\frac{1}{2}}q\Bigr)= \frac{d}{dt}\Bigl(\frac{1}{2}p_{\frac{1}{2}}\cdot{_a^CD_t}^{\frac{1}{2}}q-H\Bigr) = \frac{d}{dt}\Bigl(\frac{\gamma}{2}\left({_a^CD_t}^{\frac{1}{2}}q\right)^2-H\Bigr) = 0.
\end{equation}
From \eqref{b17} it is ease to see that the Hamiltonian for a particle under frictional forces is not a conserved quantity, as expected. The Hamiltonian and consequently the total energy of the system is only time locally conserved, when we consider only very short time intervals by taking the limit $a\rightarrow b$. In this last case we have ${_a^CD_t}^{\frac{1}{2}}q\rightarrow 0$ and \eqref{b17} reduces to $ \frac{dH}{dt} = 0$.

\section{Fractional optimal control problems with classical and Caputo derivatives}
\label{foc}

We now adopt the Hamiltonian formalism in order to generalize the
Noether type results found in
\cite{CD:Djukic:1972,Caputo,CD:JMS:Torres:2002a} for the more general context
of fractional optimal control problems with classical and Caputo derivatives.
For this, we make use of our Noether's Theorem~\ref{theo:tndf} and
the standard Lagrange multiplier technique (\textrm{cf.}
\cite{CD:Djukic:1972}). The fractional optimal control problem with classical
and Caputo derivatives is introduced, without loss of
generality, in Lagrange form as in \cite{AlmeidaTorres,PAT}:
\begin{equation}
\label{eq:COA}
I[q(\cdot),u(\cdot),\mu(\cdot)]
= \int_a^b L\left(t,q(t),u(t),\mu(t)\right) dt \longrightarrow \min
\end{equation}
subject to the differential system
\begin{gather}
\label{eq:sitRL1}
\dot{q}(t)=\varphi\left(t,q(t),u(t)\right),\\
\label{eq:sitRL}
 {_a^CD_t}^\alpha
q(t)=\rho\left(t,q(t),\mu(t)\right)
\end{gather}
and initial condition
\begin{equation}
\label{eq:COIRL}
q(a)=q_a\, .
\end{equation}

The Lagrangian $L :[a,b] \times \mathbb{R}^{n}\times
\mathbb{R}^{m}\times \mathbb{R}^{d}\rightarrow \mathbb{R}$, the
velocity vector $\varphi:[a,b] \times \mathbb{R}^{n}\times
\mathbb{R}^m\rightarrow \mathbb{R}^{n}$ and the fractional velocity
vector $\rho:[a,b] \times \mathbb{R}^{n}\times
\mathbb{R}^d\rightarrow \mathbb{R}^{n}$ are assumed to be functions
of class $C^{1}$ with respect to all their arguments. We also
assume, without loss of generality, that $0<\alpha<1$. In conformity
with the calculus of variations, we are considering that the control
functions $u(\cdot)$ and $\mu(\cdot)$ take values on an open set of
$\mathbb{R}^m$ and $\mathbb{R}^d$, respectively.

\begin{remark}
\label{rem:cv:pc}
The fractional functional of the calculus of variations
with classical and Caputo derivatives \eqref{Pf}
is obtained from \eqref{eq:COA}--\eqref{eq:sitRL} choosing\\
$\varphi(t,q,u)=u$ and $\rho(t,q,\mu)=\mu$.
\end{remark}


\subsubsection{Fractional Pontryagin Maximum Principle}

In the fifties of the twentieth century, L.S. Pontryagin and his collaborators proved the main necessary optimality condition for optimal control
problems: the famous Pontryagin Maximum Principle \cite{Pontryagin}.

 In this subsection we prove a fractional maximum principle
with the help of optimality conditions \eqref{eq:eldf}.

\begin{definition}(Process with classical and Caputo derivatives).
\label{pros}
An admissible triplet $(q(\cdot),u(\cdot),\mu(\cdot))$ that satisfies the
control system \eqref{eq:sitRL1}--\eqref{eq:sitRL} of the
optimal control problem \eqref{eq:COA}--\eqref{eq:COIRL},
$t \in [a,b]$, is said to be a \emph{process with classical and Caputo derivatives}.
\end{definition}

For convenience of notation, we introduce the following operator:
$$
[q,u,\mu,p,p_{\alpha}](t) = \left(t, q(t), u(t), \mu(t), p(t), p_{\alpha}(t)\right)
$$

\begin{remark}
In mechanics, $p(\cdot)$ and $p_\alpha(\cdot)$ correspond to the generalized
momentum related to $\dot{q}(\cdot)$ and ${_a^CD_t}^\alpha
q(\cdot)$, respectively. In the language of optimal control $p(\cdot)$ and
$p_\alpha(\cdot)$ are called the adjoint variables.
\end{remark}

\begin{theorem}(Fractional Pontryagin Maximum Principle).
\label{th:AG}
If\\ $(q(\cdot),u(\cdot),\mu(\cdot))$ is a process
for problem \eqref{eq:COA}--\eqref{eq:COIRL}, in the sense of Definition~\ref{pros},
then there exists co-vector functions $p(\cdot)\in PC^{1}([a,b];\mathbb{R}^{n})$
and $p_{\alpha}(\cdot)\in PC^{1}([a,b];\mathbb{R}^{n})$
such that for all $t\in [a,b]$ the
quadruple  $(q(\cdot),u(\cdot),p(\cdot),p_{\alpha}(\cdot))$ satisfies
the following conditions:
\begin{itemize}
\item the Hamiltonian system
\begin{equation}
\label{eq:HamRL}
\begin{cases}
\partial_5 {\cal H}[q,u,\mu,p,p_{\alpha}](t)=\dot{q}(t)\,  ,\\
\partial_6 {\cal H}[q,u,\mu,p,p_{\alpha}](t)={_a^CD_t}^\alpha q(t)  \, , \\
\partial_2{\cal H}[q,u,\mu,p,p_{\alpha}](t)=-\dot{p}(t)+_tD_b^\alpha p_{\alpha}(t)  \, ;
\end{cases}
\end{equation}
\item the stationary conditions
\begin{equation}
\label{eq:HamRL1}
\begin{cases}
 \partial_3 {\cal H}[q,u,\mu,p,p_{\alpha}](t)=0 \, ,\\
 \partial_4 {\cal H}[q,u,\mu,p,p_{\alpha}](t)=0 \, ;
\end{cases}
\end{equation}
\end{itemize}
where the Hamiltonian ${\cal H}$ is given by
\begin{multline}
\label{eq:HL}
{\cal H}\left[t,q,u,\mu,p,p_{\alpha}\right](t) \\
= L\left(t,q(t),u(t),\mu(t)\right) + p(t) \cdot \varphi\left(t,q(t),u(t)\right)
+p_{\alpha}(t)\cdot \rho(t,q(t),\mu(t)) \, .
\end{multline}
\end{theorem}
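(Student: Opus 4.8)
The plan is to reduce the constrained optimal control problem \eqref{eq:COA}--\eqref{eq:COIRL} to an \emph{unconstrained} variational problem of the form of Problem~\ref{Pb1}, and then to read off all the assertions from the fractional Euler--Lagrange equation \eqref{eq:eldf} of Theorem~\ref{Thm:FractELeq1}. The device is the standard Lagrange multiplier technique: I adjoin the two differential constraints \eqref{eq:sitRL1} and \eqref{eq:sitRL} to the cost with multiplier functions $p(\cdot)$ and $p_\alpha(\cdot)$, forming the augmented functional
\begin{equation*}
J=\int_a^b\Bigl[L(t,q,u,\mu)+p\cdot\bigl(\varphi(t,q,u)-\dot q\bigr)+p_\alpha\cdot\bigl(\rho(t,q,\mu)-{_a^CD_t^\alpha}q\bigr)\Bigr]\,dt .
\end{equation*}
On any process the bracketed constraint terms vanish, so $J$ agrees with $I$ along admissible triplets; hence a minimizer of the constrained problem is a critical point of $J$ once the multipliers are suitably chosen. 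Recognizing the Hamiltonian \eqref{eq:HL}, this rewrites as
\begin{equation*}
J=\int_a^b\Bigl[{\cal H}(t,q,u,\mu,p,p_\alpha)-p\cdot\dot q-p_\alpha\cdot{_a^CD_t^\alpha}q\Bigr]\,dt .
\end{equation*}

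The point is that $J$ is now a free functional of the extended curve $(q,u,\mu,p,p_\alpha)$ whose integrand has exactly the structure $\widetilde L\bigl(t,\,(q,u,\mu,p,p_\alpha),\,(\dot q,\dots),\,({_a^CD_t^\alpha}q,\dots)\bigr)$ required by Problem~\ref{Pb1}: only $q$ enters through its classical and Caputo derivatives, while $u,\mu,p,p_\alpha$ enter algebraically, their velocity and fractional-derivative slots being absent from $\widetilde L$. I would therefore apply \eqref{eq:eldf} to each block of variables in turn. Varying $p$ and $p_\alpha$, which occur without derivatives, the Euler--Lagrange equation collapses to the algebraic stationarity $\partial_p\widetilde L=0$ and $\partial_{p_\alpha}\widetilde L=0$; since $\partial_p\widetilde L=\partial_5{\cal H}-\dot q$ and $\partial_{p_\alpha}\widetilde L=\partial_6{\cal H}-{_a^CD_t^\alpha}q$, these reproduce the first two lines of \eqref{eq:HamRL}, i.e.\ they simply return the state equations $\dot q=\varphi$ and ${_a^CD_t^\alpha}q=\rho$. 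Likewise, varying the controls $u$ and $\mu$ gives $\partial_3{\cal H}=0$ and $\partial_4{\cal H}=0$, which are precisely the stationary conditions \eqref{eq:HamRL1}.

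Finally, varying $q$, which carries both $\dot q$ and ${_a^CD_t^\alpha}q$, the full equation \eqref{eq:eldf} applies with $\partial_2\widetilde L=\partial_2{\cal H}$, $\partial_3\widetilde L=-p$ and $\partial_4\widetilde L=-p_\alpha$, yielding
\begin{equation*}
\partial_2{\cal H}-\frac{d}{dt}(-p)+{_tD_b^\alpha}(-p_\alpha)=0,
\end{equation*}
that is $\partial_2{\cal H}=-\dot p+{_tD_b^\alpha}p_\alpha$, the adjoint equation in \eqref{eq:HamRL}. The appearance of the \emph{right} Riemann--Liouville derivative ${_tD_b^\alpha}$ acting on $p_\alpha$ is exactly the transfer produced by the Caputo integration-by-parts formula \eqref{integracao:partes}, as already seen in the proof of Theorem~\ref{Thm:FractELeq1}. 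Collecting the six relations gives \eqref{eq:HamRL}, \eqref{eq:HamRL1}, and the Hamiltonian \eqref{eq:HL}.

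I expect the genuine content, rather than the bookkeeping above, to lie in justifying the Lagrange multiplier step itself: one must guarantee the existence of co-vectors $p,p_\alpha\in PC^{1}([a,b];\mathbb{R}^n)$ for which the minimizer is a critical point of $J$, which requires a nondegeneracy (constraint-qualification) hypothesis on the maps $\varphi$ and $\rho$. A second technical point is the treatment of boundary terms when varying $q$: only the initial condition \eqref{eq:COIRL} is prescribed, so admissible variations $h$ satisfy $h(a)=0$ while $h(b)$ is free; to invoke the simple form \eqref{integracao:partes} one either restricts to variations vanishing at both endpoints, obtaining \eqref{eq:HamRL}--\eqref{eq:HamRL1} on the interior, or carries the boundary term along, which yields a natural transversality condition on $p$ at $t=b$. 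Once these points are settled the proof is complete, and by Remark~\ref{rem:cv:pc} the special choice $\varphi=u$, $\rho=\mu$ recovers the Euler--Lagrange setting \eqref{eq:eldf} as a consistency check.
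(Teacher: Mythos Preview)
Your proposal is correct and follows exactly the paper's approach: form the augmented functional $J$ via Lagrange multipliers $p$ and $p_\alpha$, then apply the Euler--Lagrange equation \eqref{eq:eldf} component by component to the extended Lagrangian $\mathcal{L}={\cal H}-p\cdot\dot q-p_\alpha\cdot{_a^CD_t^\alpha}q$. In fact your write-up is more detailed than the paper's own proof, which only displays the $q$-variation explicitly and does not address the constraint-qualification or boundary-term issues you flag; those concerns are legitimate but the paper simply invokes the multiplier rule without further justification.
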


\begin{proof}
Minimizing \eqref{eq:COA} subject to
\eqref{eq:sitRL1}--\eqref{eq:sitRL} is equivalent,
by the Lagrange multiplier rule,
to minimize the augmented functional $J[q(\cdot),u(\cdot),\mu(\cdot),p(\cdot),p_{\alpha}(\cdot)]$
defined by
\begin{multline}
\label{eq:COA1} J[q(\cdot),u(\cdot),\mu(\cdot),p(\cdot),p_{\alpha}(\cdot)]
= \int_a^b \Bigl[{\cal H}[q,u,\mu,p,p_{\alpha}](t)\\
- p(t) \cdot \dot{q}(t)-p_{\alpha}(t)\cdot {_a^CD_t}^\alpha q(t)\Bigr]dt
\end{multline}
with ${\cal H}$ given by \eqref{eq:HL}.

Theorem~\ref{th:AG} is proved applying the necessary optimality
condition \eqref{eq:eldf} to the augmented functional
\eqref{eq:COA1}: we only proof the one of optimality equations of
Theorem~\ref{th:AG} (the reasoning is similar for the other
equations)
\begin{multline*}
\partial_{2} \mathcal{L}[q,u,\mu,p,p_{\alpha}](t)
-\frac{d}{dt}\frac{\partial\mathcal{L}}{\partial \dot{q}}
[q,u,\mu,p,p_{\alpha}](t) \\+
{_tD_b^\alpha}\frac{\partial\mathcal{L}}{\partial\,{_a^CD_t^\alpha}
q}
[q,u,\mu,p,p_{\alpha}](t)  = 0\\
\Leftrightarrow \partial_2{\cal
H}[q,u,\mu,p,p_{\alpha}](t)=-\dot{p}+_tD_b^\alpha p_{\alpha}
\end{multline*}
where $$\mathcal{L}[q,u,\mu,p,p_{\alpha}](t)={\cal H}[q,u,\mu,p,p_{\alpha}](t)\\
- p(t) \cdot \dot{q}(t)-p_{\alpha}(t)\cdot {_a^CD_t}^\alpha
q(t)\,.$$
\end{proof}

\begin{definition}(Pontryagin $S_h$-extremal with classical
and fractional derivatives). \label{PR} A tuple
$\left(q(\cdot),u(\cdot),\mu(\cdot),p(\cdot),p_{\alpha}(\cdot)\right)$
satisfying Theorem~\ref{th:AG} is called a \emph{Pontryagin
$S_h$-extremal with classical and Caputo derivatives}.
\end{definition}

\begin{remark}
For problems of the calculus of variations with classical and Caputo
derivatives, one has $\varphi(t,q,u)=u$ and $\rho(t,q,\mu)=\mu$
(Remark~\ref{rem:cv:pc}). Therefore, ${\cal H} = L + p \cdot u
+p_{\alpha}\cdot\mu$. From the Hamiltonian system \eqref{eq:HamRL}
we get
\begin{equation}
\begin{cases}\label{edr}
 u=\dot{q}\\
\mu={_a^CD_t}^\alpha q\\
\partial_2L=-\dot{p}+_tD_b^\alpha p_{\alpha}
\end{cases}
\end{equation}
and from the stationary conditions \eqref{eq:HamRL1}
\begin{equation}
\begin{cases}\label{edr1}
\partial_3{\cal H}=0\Leftrightarrow \partial_3 L=-p
\Rightarrow\frac{d}{dt}\partial_3 L=-\dot{p},\\
\partial_4{\cal H}=0\Leftrightarrow \partial_4 L=-p_{\alpha}
\Rightarrow {_tD_b^\alpha}\partial_4 L=-{_tD_b^\alpha} p_{\alpha}.
\end{cases}
\end{equation}
Substituting the quantities \eqref{edr1} into \eqref{edr}, we arrive
to the Euler--Lagrange equations with classical and Caputo
derivatives \eqref{eq:eldf}.
\end{remark}


\subsubsection{Noether's theorem for fractional optimal control problems}

The notion of variational invariance for
\eqref{eq:COA}--\eqref{eq:sitRL} is defined with the help of the
augmented functional \eqref{eq:COA1}.

\begin{definition}(Variational invariance of \eqref{eq:COA}--\eqref{eq:sitRL}).
\label{def:inv:gt1} The augmented functional \eqref{eq:COA1} is said
to be $\varepsilon$-invariant under the action of one parameter
group of diffeomorphisms
$$\Psi_{i=1,...,6}=\left\{\psi_i(\varepsilon,\cdot)\right\}_{\varepsilon\in\mathbb{R}}\in
\mathbb{R}\times
\mathbb{R}^n\times\mathbb{R}^m\times\mathbb{R}^d\times\mathbb{R}^n\times\mathbb{R}^n$$
 if it satisfies for any \emph{Pontryagin $S_h$-extremal
with classical and Caputo derivatives}
\begin{multline}
\label{eq:condInv} \bigl({\cal
H}\left(\psi_1(\varepsilon,t),\psi_2(\varepsilon,q(t)),
\psi_3(\varepsilon,u(t)),\psi_4(\varepsilon,\mu(t)),\psi_5(\varepsilon,p(t)),\psi_6(\varepsilon,p_{\alpha}(t))\right) \\
-\psi_5(\varepsilon,p(t))\cdot \frac{\dot{\psi_2}(\varepsilon,q(t))}{\dot{\psi_1}(\varepsilon,t)}
-\psi_6(\varepsilon,p_{\alpha}) \cdot
{_{\bar{a}}^CD_{\bar{t}}^\alpha
\psi_2(\varepsilon,q(t))})\bigr)\dot{\psi_1}(\varepsilon,t) \\
=\left({\cal H}[q,u,\mu,p,p_{\alpha}](t) -p(t)\cdot
\dot{q}(t)-p_{\alpha}(t)\cdot{_a^CD_t}^\alpha q(t)\right)\,.
\end{multline}
for any subinterval $[{t_{a}},{t_{b}}] \subseteq [a,b]$.
\end{definition}

In \cite{CD:BoCrGr}, the author proved the Noether's theorem without
transformation of the independent variable $t$ for the following
fractional control problem: .
\begin{gather*}
 I[q(\cdot),u(\cdot)] = \int_a^b
L\left(t,q(t),u(t)\right) dt
\longrightarrow \min \\
_a^CD_t^\alpha q(t)=\varphi\left(t,q(t),u(t)\right) \, . \notag
\end{gather*}
In this case he only obtain  the conservation of momentum.

 Next theorem provides an extension of Noether's theorem in general form to the
wider fractional context of optimal control problems with classical
and Caputo derivatives.

\begin{theorem}(Noether's theorem in Hamiltonian form
for optimal control problems with classical and Caputo derivatives).
\label{thm:mainResult} If \eqref{eq:COA}--\eqref{eq:sitRL} is
variationally invariant, in the sense of
Definition~\ref{def:inv:gt1}, and functions $f_2$ and $p_{\alpha}$
satisfy condition $(\mathcal{C})$ of Theorem~\ref{trfo}, then
\begin{multline}
\label{eq:tndf:CO} \frac{d}{dt}\Biggl[-f_2\cdot p
-\sum_{r=0}^{\infty}\Bigl((-1)^{r}p_{\alpha}^{(r)}
\cdot {_aI_t}^{r+1-\alpha}(f_2-f_2(a))\\
+f_2^{(r)} \cdot {_tI_b}^{r+1-\alpha}p_{\alpha}\Bigr) +
\tau\left({\cal H}-(1-\alpha)p_{\alpha}
\cdot{_a^CD_t^{\alpha}}q\right)\Biggr]= 0
\end{multline}
along any Pontryagin $S_h$-extremal with classical and Caputo
derivatives (Definition~\ref{PR}).
\end{theorem}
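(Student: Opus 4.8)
The plan is to obtain \eqref{eq:tndf:CO} by recognizing the augmented functional \eqref{eq:COA1} as an instance of the fractional variational problem \eqref{Pf} and then invoking the general Noether Theorem~\ref{theo:tndf} already proved for that problem. Concretely, I would treat the extended curve $Q(\cdot)=\bigl(q(\cdot),u(\cdot),\mu(\cdot),p(\cdot),p_{\alpha}(\cdot)\bigr)$ as the dynamical variable and set
$$
\tilde{L}\bigl(t,Q,\dot{Q},{_a^CD_t^\alpha}Q\bigr)
:= \mathcal{H}[q,u,\mu,p,p_{\alpha}](t)-p\cdot\dot{q}-p_{\alpha}\cdot{_a^CD_t^\alpha}q ,
$$
so that $J=\int_a^b \tilde{L}\,dt$ has exactly the form \eqref{Pf}. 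The crucial structural observation is that $\tilde{L}$ depends on the velocity $\dot{Q}$ and on the Caputo derivative ${_a^CD_t^\alpha}Q$ only through their $q$-components, while $u,\mu,p,p_{\alpha}$ enter algebraically.

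First I would check that the variational invariance of Definition~\ref{def:inv:gt1} is nothing but invariance of $J$ in the sense of Definition~\ref{def:invadf}. Integrating \eqref{eq:condInv} over an arbitrary subinterval $[t_a,t_b]$ and performing the change of variable $\bar{t}=\psi_1(\varepsilon,t)$, whose Jacobian is the factor $\dot{\psi_1}$, the left-hand side becomes $\int_{\psi_1(\varepsilon,t_a)}^{\psi_1(\varepsilon,t_b)}\tilde{L}$ evaluated at the transformed arguments $\bigl(\bar{t},\psi_2,\dot{\psi_2}/\dot{\psi_1},{_{\bar a}^CD_{\bar t}^\alpha}\psi_2\bigr)$, which is precisely the right-hand side of \eqref{eq:invdf1} written for $\tilde{L}$; the right-hand side of \eqref{eq:condInv} is the untransformed integrand. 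Hence $J$ is invariant in the sense of Definition~\ref{def:invadf}.

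Next I would simply apply Theorem~\ref{theo:tndf} to $\tilde{L}$ along a Pontryagin $S_h$-extremal which, by the remark following Theorem~\ref{th:AG}, is exactly an $S_h$-extremal of $J$, since the Euler--Lagrange equations \eqref{eq:eldf} for $J$ reproduce the Hamiltonian system \eqref{eq:HamRL}--\eqref{eq:HamRL1}. Differentiating $\tilde{L}$ gives
$$
\partial_3\tilde{L}=-p,\qquad \partial_4\tilde{L}=-p_{\alpha},
$$
these partials vanishing in the $u,\mu,p,p_{\alpha}$ directions, so that the infinitesimal generators $f_3,\dots,f_6$ attached to those variables drop out of the conservation law and only $f_2$ and $\tau$ survive. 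The condition $(\mathcal{C})$ of Theorem~\ref{trfo} required by Theorem~\ref{theo:tndf}, imposed there on $f_2$ and $\partial_4 L$, becomes here precisely the assumed condition on $f_2$ and $p_{\alpha}$.

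Substituting $\partial_3\tilde{L}=-p$ and $\partial_4\tilde{L}=-p_{\alpha}$ into the conservation law \eqref{eq:LC:Frac:RL1} turns the momentum terms into $-f_2\cdot p$ and $-\sum_{r}\bigl((-1)^r p_{\alpha}^{(r)}\cdot{_aI_t}^{r+1-\alpha}(f_2-f_2(a))+f_2^{(r)}\cdot{_tI_b}^{r+1-\alpha}p_{\alpha}\bigr)$, while the time-translation term simplifies, using $\tilde{L}=\mathcal{H}-p\cdot\dot{q}-p_{\alpha}\cdot{_a^CD_t^\alpha}q$, as
$$
\tau\bigl(\tilde{L}-\dot{q}\cdot\partial_3\tilde{L}-\alpha\,\partial_4\tilde{L}\cdot{_a^CD_t^\alpha}q\bigr)
=\tau\bigl(\mathcal{H}-(1-\alpha)\,p_{\alpha}\cdot{_a^CD_t^\alpha}q\bigr),
$$
which is exactly \eqref{eq:tndf:CO}. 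I expect the main obstacle to be the bookkeeping of the second step, namely verifying that Definition~\ref{def:inv:gt1} really matches the hypothesis of Theorem~\ref{theo:tndf} under the time-reparametrization and the transformed Caputo operator ${_{\bar a}^CD_{\bar t}^\alpha}$, together with justifying that the augmented functional $J$, whose integrand is only piecewise $C^1$ in the control and adjoint variables, may legitimately be handled within the $C^2$ framework of Problem~\ref{Pb1}; once that is settled, the remainder is a direct substitution.
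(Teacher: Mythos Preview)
Your proposal is correct and follows exactly the same approach as the paper: the paper's proof consists of the single sentence that \eqref{eq:tndf:CO} is obtained by applying Theorem~\ref{theo:tndf} to the augmented functional \eqref{eq:COA1}, and you have simply fleshed out the details of that application---identifying $\partial_3\tilde{L}=-p$, $\partial_4\tilde{L}=-p_{\alpha}$ and carrying out the substitution into \eqref{eq:LC:Frac:RL1}. Your caveats about the $C^2$ regularity assumptions versus the $PC^1$ adjoints and about matching the two invariance definitions are legitimate technical points that the paper itself does not address.
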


\begin{proof}
The fractional conservation law \eqref{eq:tndf:CO} is obtained by
applying Theorem~\ref{theo:tndf} to the equivalent functional
\eqref{eq:COA1}.
\end{proof}

Like  Teorem~\ref{theo:tndf}, Theorem~\ref{thm:mainResult} also
gives an interesting result for autonomous fractional problems. Let
us consider an autonomous fractional optimal control problem,
\textrm{i.e.}, the case when functions $L$, $\varphi$ and $\rho$ of
\eqref{eq:COA}--\eqref{eq:sitRL} do not depend explicitly on the
independent variable $t$:
\begin{gather}
I[q(\cdot),u(\cdot),\mu(\cdot)] =\int_a^b L\left(q(t),u(t),\mu(t)\right) dt
\longrightarrow \min \, , \label{eq:FOCP:CO}\\
\dot{q}(t)=\varphi(q(t),u(t))\label{eq:FOCP:CO10}\,,\\
{_a^CD_t^{\alpha}} q(t)=\rho\left(q(t),\mu(t)\right)\,.
\label{eq:45}
\end{gather}

\begin{corollary}
\label{cor:FOCP:CO}
For the autonomous fractional problem
\eqref{eq:FOCP:CO}--\eqref{eq:45} one has
\begin{equation}
\label{eq:frac:eng} \frac{d}{dt}\Biggl[ {\cal H}(q(t),
u(t),\mu(t),p(t),p_{\alpha}(t))
-(1-\alpha)p_{\alpha}(t)\cdot{_a^CD_t^{\alpha}}q(t) \Biggr]= 0
\end{equation}
along any Pontryagin $S_h$-extremal with classical and Caputo
derivatives\\
$(q(\cdot),u(\cdot),\mu(\cdot),p(\cdot),p_{\alpha}(\cdot))$.
\end{corollary}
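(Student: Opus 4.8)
The plan is to derive Corollary~\ref{cor:FOCP:CO} as the special case of Theorem~\ref{thm:mainResult} corresponding to invariance under a pure time-translation, exactly paralleling the way Corollary~\ref{cor:FOCP:CV} was obtained from Theorem~\ref{theo:tndf}. First I would observe that since $L$, $\varphi$ and $\rho$ do not depend explicitly on $t$, the augmented functional \eqref{eq:COA1} is invariant under the one-parameter group $\psi_1(\varepsilon,t)=t+\varepsilon$ acting on the independent variable, while all the state, control and co-state variables are left untouched: $\psi_i(\varepsilon,\cdot)=\mathrm{Id}$ for $i=2,\dots,6$. This gives the infinitesimal generators $\tau=\frac{\partial\psi_1}{\partial\varepsilon}(0,t)=1$ and $f_2=\frac{\partial\psi_2}{\partial\varepsilon}(0,q)=0$, together with $\dot\psi_1=1$.

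The key step is to verify that the invariance condition \eqref{eq:condInv} of Definition~\ref{def:inv:gt1} is genuinely satisfied by this shift. With $\psi_1(\varepsilon,t)=t+\varepsilon$ one has $\bar a=a+\varepsilon$ and $\bar t=t+\varepsilon$, and the substitution $s=\theta-\varepsilon$ in the defining Caputo integral shows that
$$
{_{\bar a}^CD_{\bar t}^\alpha}\,q(t)={_a^CD_t^\alpha}\,q(t),
$$
precisely as established inside the proof of Corollary~\ref{cor:FOCP:CV}. Since $\dot\psi_2/\dot\psi_1=\dot q$ and the co-states are unchanged, both sides of \eqref{eq:condInv} reduce to the same expression ${\cal H}[q,u,\mu,p,p_\alpha](t)-p\cdot\dot q-p_\alpha\cdot{_a^CD_t^\alpha}q$, so the functional is variationally invariant in the required sense.

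Having checked invariance, I would simply substitute $f_2=0$ and $\tau=1$ into the general conservation law \eqref{eq:tndf:CO} of Theorem~\ref{thm:mainResult}. The entire infinite sum and the $-f_2\cdot p$ term vanish identically because every surviving factor carries $f_2$ or one of its derivatives $f_2^{(r)}$, all of which are zero. What remains is exactly
$$
\frac{d}{dt}\Bigl[{\cal H}-(1-\alpha)p_\alpha\cdot{_a^CD_t^\alpha}q\Bigr]=0,
$$
which is the claimed identity \eqref{eq:frac:eng}. I expect the only real point requiring care to be the invariance verification, specifically confirming that the time-shift leaves the Caputo derivative unchanged via the change of variables in the fractional integral; once that computation is in hand the corollary follows by direct specialization, with no further analytic obstacle.
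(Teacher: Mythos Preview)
Your proposal is correct and follows essentially the same approach as the paper: the paper's proof simply says it is ``similar to Corollary~\ref{cor:FOCP:CV}'' using Definition~\ref{def:inv:gt1}, and you have written out precisely those details---verifying invariance of \eqref{eq:condInv} under the pure time translation $\psi_1(\varepsilon,t)=t+\varepsilon$, $\psi_i=\mathrm{Id}$ for $i\ge2$, then specializing \eqref{eq:tndf:CO} with $\tau=1$, $f_2=0$. The only point you leave implicit is that condition $(\mathcal{C})$ of Theorem~\ref{trfo} is trivially satisfied when $f_2\equiv0$, but this is immediate.
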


\begin{proof}
The proof is similar of Corollary~\ref{cor:FOCP:CV} to taking into
account the Definition~\ref{def:inv:gt1} applied to the
Problem \eqref{eq:FOCP:CO}--\eqref{eq:45}.
\end{proof}

The Corollary~\ref{cor:FOCP:CO} shows that in contrast with
the classical autonomous problem of optimal control,
for \eqref{eq:FOCP:CO}--\eqref{eq:45}
the Hamiltonian ${\cal H}$ does not define a conservation law.
Instead of the classical equality
$\frac{d}{dt}\left({\cal H}\right)=0$, we have
\begin{equation}
\label{eq:ConsHam:alpha} \frac{d}{dt} \left[ {\cal H} +
\left(\alpha-1\right) p_{\alpha} \cdot {_a^CD_t^{\alpha}} q \right]
= 0 \, ,
\end{equation}
\textrm{i.e.}, conservation of the Hamiltonian ${\cal H}$
plus a quantity that depends on the fractional order $\alpha$ of differentiation.
This seems to be explained by violation of the homogeneity
of space-time caused by the fractional
derivatives, when $\alpha\neq 1$. If $\alpha=1$, then
we obtain from \eqref{eq:ConsHam:alpha} the classical result:
the Hamiltonian ${\cal H}$ is preserved along all the
Pontryagin extremals.

\section{Conclusion}
\label{conclusion}

In the present work, we obtained a generalization of the Noether's theorem for Lagrangians depending on mixed classical and Caputo derivatives that can be used to obtain constants of motion for dissipative systems. The Noether's theorem of calculus of variation is one of the most important theorems for physics in the 20th century. It is well know that all conservations
laws in mechanics, \textrm{e.g.}, conservation of energy or conservation of momentum, are directly related to the invariance of the action under a family of transformations. However, the classical Noether's theorem can not yields informations about constants of motion for non-conservative systems since it is not possible to formulate physically meaningful Lagrangians for this kind of systems in classical calculus of variation. On the other hand, in recent years the fractional calculus of variation within Lagrangians depending on fractional derivatives has emerged as an elegant alternative to study non-conservative systems. In this context, the generalization of the Noether's theorem for the fractional calculus of variation is fundamental to investigate the action symmetries for non-conservative systems. As an example of application to non-conservative systems, we study the problem of a particle under a frictional force. In addition, we also obtained Noether's conditions for the fractional optimal control problem.

\begin{acknowledgements}
This work was partially supported by CNPq and CAPES (Brazilian research funding agencies).
\end{acknowledgements}



\end{document}